\newtheorem{theorem}{Theorem}[section]
\newtheorem{lemma}[theorem]{Lemma}
\newtheorem{proposition}[theorem]{Proposition}
\newtheorem{corollary}[theorem]{Corollary}
\theoremstyle{definition}
\newtheorem{example}[theorem]{Example}
\newtheorem{remark}[theorem]{Remark}
\numberwithin{equation}{section}
\begin{document}

\author{Pankaj Dey and Mithun Mukherjee}

\address{Pankaj Dey, School of Mathematics, Indian Institute of Science Education and Research Thiruvananthapuram, Maruthamala PO, Vithura, Thiruvananthapuram-695551, Kerala, India.}
\email{\textcolor[rgb]{0.00,0.00,0.84}{pankajdey15@iisertvm.ac.in}}

\address{Mithun Mukherjee, School of Mathematical \& Computational Sciences, Indian Association for the Cultivation of Science, 2A \& 2B Raja S C Mullick Road, Jadavpur, Kolkata-700032, India.}
\email{\textcolor[rgb]{0.00,0.00,0.84}{mithun.mukherjee@iacs.res.in}}

\title[Higher Rank Numerical Ranges and Unitary Dilations]{Higher Rank Numerical Ranges and Unitary Dilations}

\begin{abstract}
Here we show that for $k\in \mathbb N,$ the closure of the $k$-rank numerical range of a contraction $A$ acting on an infinite-dimensional Hilbert space $\mathcal{H}$ is the intersection of the closure of the $k$-rank numerical ranges of all unitary dilations of $A$ to $\mathcal{H}\oplus\mathcal{H}.$ The same is true for $k=\infty$ provided the $\infty$-rank numerical range of $A$ is non-empty. These generalize a finite dimensional result of Gau, Li and Wu. We also show that when both defect numbers of a contraction are equal and finite ($=N$), one may restrict the intersection to a smaller family consisting of all unitary $N$-dilations. A result of {Bercovici and Timotin} on unitary $N$-dilations is used to prove it. Finally, we have investigated the same problem for the $C$-numerical range and obtained the answer in negative.
\end{abstract}
\maketitle

\let\thefootnote\relax\footnote{\textbf{Keywords:} \textit{Contraction, Higher rank numerical range, $C$-numerical range,  Unitary dilation.}\newline\indent
\textbf{MSC(2020): }47A12, 47A20, 15A60, 81P68.}

\section{Introduction}
Let $\mathcal{H}$ be a Hilbert space and $\mathcal{B(\mathcal{H})}$ be the algebra of all bounded linear maps acting on $\mathcal{H}$. Suppose $A\in \mathcal{B(\mathcal{H})}$. Let $\mathcal{K}$ be a Hilbert space containing $\mathcal{H}$. An operator $B\in\mathcal{B(\mathcal{K})}$ is said to be a \textit{dilation} of $A$ (or, $A$ is said to be a \textit{compression} of $B$) if there exists a projection $P\in\mathcal{B(\mathcal{K})}$ on $\mathcal{H}$ such that $A=P_{\mathcal{H}}B|_{\mathcal{H}}$ or, equivalently, if $B$ is unitarily similar to $2\times 2$ operator matrix $\begin{pmatrix}
    A & *\\
    * & *\\
  \end{pmatrix}$. If $\text{dim }\mathcal{K}\ominus\mathcal{H}=r$ then $B$ is called an \textit{$r$-dilation} of $A$. Moreover, if $B$, being a dilation of $A$, is unitary then $B$ is said to be a \textit{unitary dilation} of $A$. Halmos \cite{PH} showed that every contraction $A\in\mathcal{B(\mathcal{H})}$ has a unitary dilation $U\in\mathcal{B(\mathcal{H}\oplus\mathcal{H})}$ of the form $$U=\begin{pmatrix}
    A & -\sqrt{I-AA^*}\\
    \sqrt{I-A^*A} & A^*\\
  \end{pmatrix}.$$
It generated a lot of research including far reaching {Sz.-Nagy} dilation theorem regarding power unitary dilations of a contraction.

The notion of the quadratic form associated with a matrix has been extended for an operator acting on a Hilbert space, which is known as the numerical range. The \textit{numerical range} of $A\in\mathcal{B\mathcal{(H)}}$, denoted by $W(A)$, is defined as 
\begin{align*}
W(A):=\left\{\langle Ax,x\rangle:\Vert x\Vert=1\right\}.
\end{align*}
It has been studied extensively because of its connections and applications to many different areas. The numerical range is a non-empty, convex set. Durszt  \cite{D} proved that the numerical range of a normal operator $A$ is the intersection of all convex Borel set $s$ such that $E(s)=I$ where $E$ is the unique spectral measure associated with $A$. The closure of the numerical range of a normal operator is the closed convex hull of its spectrum \cite{GR}.

It is, in general, difficult to compute the numerical range. Halmos \cite{PH2} conjectured that for every contraction $A\in\mathcal{B(\mathcal{H})}$,
\begin{align}\label{Halmos conjecture}
W(A)=\bigcap\left\{W(U): U \text{ is a unitary dilation of A}\right\}.
\end{align}
Durszt  \cite{D} settled (\ref{Halmos conjecture}) in negative using the description of the numerical range of a normal operator in terms of the convex Borel sets. Later, {Choi and Li} \cite{CL} proved that for every contraction $A\in\mathcal{B(\mathcal{H})}$,
\begin{align}\label{Choi-Li theorem}
\overline{W(A)}=\bigcap\left\{\overline{W(U)}: U\in\mathcal{B}\mathcal{(H\oplus H)} \text{ is a unitary dilation of A}\right\}.
\end{align}
Note here that the closure sign can be omitted in finite dimensional case.

Let $A\in\mathcal{B}\mathcal{(H)}$ be a contraction. Denote $D_A=(I-A^*A)^\frac{1}{2},\; \mathcal{D}_A=\overline{\text{ran}D_A}$ and $d_A=\text{dim }\mathcal{D}_A$ as the \textit{defect operator}, the \textit{defect space} and the \textit{defect number} of $A$, respectively. {Bercovici and Timotin} \cite{BT} recently showed that if both defect numbers of a contraction are same and finite (=$N$) then one can restrict the intersection to a smaller family consisting of all unitary $N$-dilations. It complements {Choi and Li's} theorem (\ref{Choi-Li theorem}) and also generalizes a result of {Benhida, Gorkin and Timotin} \cite{BGT} for $C_0(N)$ contractions. Let $A\in\mathcal{B(\mathcal{H})}$ be a contraction with $d_A=d_{A^*}=N<\infty$. {BT} proved
\begin{align}\label{Bercovici-Timotin theorem}
\overline{W(A)}=\bigcap\left\{\overline{W(U)}: U \text{ is a unitary } N \text{-dilation of A to }\mathcal{H}\oplus\mathbb{C}^N\right\}.
\end{align}

Choi, Kribs and \.{Z}yczkowski have first defined the higher rank numerical range in the context of ``quantum error correction" \cite{CKZ}, which is defined in the following way. Let $A\in\mathcal{B\mathcal{(H)}}$ and $1\leq k\leq\infty$. The \textit{$k$-rank numerical range} of $A$, denoted by $\Lambda_k(A)$, is defined as
$$\Lambda_k(A):=\left\{\lambda\in\mathbb{C}: PAP=\lambda P,\text{ for some projection } P \text{ of rank }k\right\}$$
or, equivalently, $\lambda\in\Lambda_k(A)$ if and only if there is an orthonormal set $\{f_j\}_{j=1}^k$ such that $\langle Af_j,f_r\rangle=\lambda\delta_{j,r}$ for $j,r\in\{1,2,\cdots,k\}$. Clearly, 
\begin{align*}
W(A)=\Lambda_1(A)\supseteq\Lambda_2(A)\supseteq\cdots\supseteq\Lambda_k(A)\supseteq\cdots.
\end{align*}

Li and Sze \cite{LS} have described the higher rank numerical range of a matrix as an intersection of closed half planes. Let $A\in M_n$ and $1\leq k\leq n$. Li and Sze showed
\begin{align}\label{geometric description}
\Lambda_k(A)=\bigcap\limits_{\theta\in[0,2\pi)}\left\{\mu\in\mathbb{C}:\Re({e^{i\theta}\mu})\leq\lambda_k(\Re({e^{i\theta}A}))\right\},
\end{align}
where $\lambda_k(H)$ denotes the $k$-th largest eigenvalue of the self-adjoint matrix $H\in M_n$. As an immediate consequence, it follows that the higher rank numerical range of a matrix is convex. It also proved that for a normal matrix $A\in M_n$,
\begin{align}\label{HRNR for normal matrix}
\Lambda_k(A)=\bigcap\limits_{1\leq j_1<\cdots<j_{n-k+1}\leq n} \text{ conv }\left\{\lambda_{j_1},\ldots,\lambda_{j_{n-k+1}}\right\}.
\end{align}
However, the convexity of the higher rank numerical range of any operator was shown by Woerdeman \cite{HW}. For the non-emptyness of the higher rank numerical ranges, the reader may refer to \cite{LPS2}, \cite{RM}.

Let $A\in M_n$ be a contraction. Observe that $d_A=d_{A^*}$. {Gau, Li and Wu} \cite{GLW} proved the following which extends and refines (\ref{Choi-Li theorem}) to the higher rank numerical ranges of matrices.

\begin{theorem}[Theorem 1.1, \cite{GLW}]\label{Gau-Li-Wu theorem}
Let $A\in M_n$ be a contraction and $1\leq k\leq n$. Then
\begin{align*}
\Lambda_k(A)=\bigcap\left\{\Lambda_k(U): U\in M_{n+d_A}\text{ is a unitary dilation of A}\right\}.
\end{align*}
\end{theorem}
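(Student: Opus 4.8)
The plan is to establish the two inclusions separately; $\Lambda_k(A)\subseteq\bigcap_U\Lambda_k(U)$ is elementary while the reverse inclusion carries all the content. For the easy inclusion I would work from the orthonormal-set form of the definition: if $\lambda\in\Lambda_k(A)$, choose an orthonormal family $\{f_1,\dots,f_k\}\subseteq\mathcal H$ with $\langle Af_j,f_r\rangle=\lambda\,\delta_{j,r}$; then for \emph{any} dilation $U$ of $A$ and any $f_r\in\mathcal H$ one has $\langle Uf_j,f_r\rangle=\langle P_{\mathcal H}Uf_j,f_r\rangle=\langle Af_j,f_r\rangle$, so the same family witnesses $\lambda\in\Lambda_k(U)$. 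Hence $\Lambda_k(A)$ lies inside every $\Lambda_k(U)$, in particular inside the intersection.

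For the reverse inclusion I would argue by contraposition using the Li--Sze half-plane description (\ref{geometric description}). Suppose $\lambda\notin\Lambda_k(A)$; then there is an angle $\theta_0$ with $\Re(e^{i\theta_0}\lambda)>\lambda_k(\Re(e^{i\theta_0}A))$. Since $U\mapsto e^{i\theta_0}U$ is a bijection between the unitary dilations of $A$ and those of $e^{i\theta_0}A$ (the defect operators, hence $d_A$, are unchanged) and $\Lambda_k(e^{i\theta_0}\,\cdot)=e^{i\theta_0}\Lambda_k(\cdot)$, I may replace $A$ by $e^{i\theta_0}A$ and assume $\theta_0=0$, i.e. $t:=\lambda_k(\Re A)<\Re\lambda$. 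It then suffices to produce one unitary dilation $U\in M_{n+d_A}$ with $\lambda_k(\Re U)\le t$: because $\Re A$ is a compression of $\Re U$, Cauchy interlacing already gives $\lambda_k(\Re U)\ge t$, so this bound is in fact an equality, and then the Li--Sze description of $\Lambda_k(U)$ at angle $0$ gives $\Re\lambda>t=\lambda_k(\Re U)$, whence $\lambda\notin\Lambda_k(U)$ and $\lambda\notin\bigcap_U\Lambda_k(U)$.

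Thus everything reduces to a single construction: for a contraction $A$ with $t=\lambda_k(\Re A)$, build a unitary dilation $U\in M_{n+d_A}$ whose real part has at most $k-1$ eigenvalues exceeding $t$. Because $d_A=d_{A^*}$ in the matrix case, the unitary dilations of $A$ to $\mathcal H\oplus\mathcal D_A$ are parametrized, via the elementary-rotation (Julia) construction, by the unitaries $\omega\colon\mathcal D_A\to\mathcal D_{A^*}$, each producing a $U_\omega$ with $(1,1)$-block $A$ and off-diagonal blocks built from $D_A$ and $D_{A^*}\omega$. Let $\mathcal N\subseteq\mathcal H$ be the spectral subspace of $\Re A$ for the eigenvalues $\le t$, so $\dim\mathcal N\ge n-k+1$ and $\langle \Re A\,x,x\rangle\le t\|x\|^2$ on $\mathcal N$. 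The aim is to choose $\omega$ so that the compression of $\Re U_\omega$ to $\mathcal N\oplus\mathcal D_A$ is $\le tI$; since $\dim(\mathcal N\oplus\mathcal D_A)\ge n+d_A-k+1$, the Courant--Fischer min-max principle then forces $\lambda_k(\Re U_\omega)\le t$, completing the proof.

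The main obstacle is exactly this choice of $\omega$. On $\mathcal N$ the quadratic form of $\Re U_\omega$ is inherited from $\Re A$ and already satisfies the bound, but one must simultaneously control the cross terms between $\mathcal N$ and $\mathcal D_A$ and the bottom-right block, i.e. arrange $\left(\begin{smallmatrix} tI-\Re A|_{\mathcal N} & -B\\ -B^{*} & tI-C\end{smallmatrix}\right)\ge 0$ for the induced blocks $B,C$. I expect to handle this by a Schur-complement argument that isolates the at most $k-1$ eigendirections of $\Re A$ lying strictly above $t$ and reduces the remaining part to the $k=1$ statement — the finite-dimensional Choi--Li fact that a contraction with $\Re A\le tI$ admits a unitary dilation in $M_{n+d_A}$ with $\Re U\le tI$. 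Verifying that the single parameter $\omega$ can be tuned to meet a genuine operator inequality on $\mathcal N\oplus\mathcal D_A$, rather than merely on a smaller subspace where it would be automatic, is the delicate point.
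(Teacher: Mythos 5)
Your proposal correctly sets up the standard reduction, but it does not prove the theorem: the entire mathematical content is concentrated in the one step you leave unverified. Note first that the paper under review does not prove this statement at all --- it quotes it from \cite{GLW} and later uses it as a black box (in the proof of Proposition \ref{contractive dilation}, exactly in the form ``there exists a unitary dilation $U_0\in M_{n+d_A}$ of $A$ with $\lambda_k(U_0+U_0^*)=\lambda_k(A+A^*)$''). Your skeleton --- easy inclusion via compressions, contraposition via the Li--Sze half-plane description, rotation to $\theta_0=0$, Cauchy interlacing giving $\lambda_k(\Re U)\ge t$ automatically --- is sound and is precisely the scheme the paper itself follows for its infinite-dimensional generalizations (Theorems \ref{main theorem 1} and \ref{main theorem 2}). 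But that scheme reduces everything to the key dilation lemma: for every contraction $A\in M_n$ there is a unitary dilation $U\in M_{n+d_A}$ with $\lambda_k(\Re U)=\lambda_k(\Re A)$. That lemma is Gau--Li--Wu's actual theorem; your proposal ends where their proof begins.

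Two concrete problems with the sketch you offer for that lemma. First, the fallback you invoke --- ``the finite-dimensional Choi--Li fact that a contraction with $\Re A\le tI$ admits a unitary dilation in $M_{n+d_A}$ with $\Re U\le tI$'' --- is not a Choi--Li result: their constrained-dilation theorem produces dilations acting on $\mathcal H\oplus\mathcal H$ (size $2n$), and the refinement to the minimal size $n+d_A$ is part of what \cite{GLW} proves, even for $k=1$. So the reduction is circular as stated. Second, your target inequality $\left(\begin{smallmatrix} tI-\Re A|_{\mathcal N} & -B\\ -B^{*} & tI-C\end{smallmatrix}\right)\ge 0$ is genuinely delicate in a way the sketch does not engage: since $t=\lambda_k(\Re A)$ is typically an eigenvalue of $\Re A$, the diagonal block $tI-\Re A|_{\mathcal N}$ is singular, and positivity then forces the coupling $B^*$ to vanish identically on the eigenspace $\ker(\Re A-tI)$. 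In the Julia parametrization this amounts to choosing the unitary $\omega$ so that $\omega D_Ax=D_{A^*}x$ for every such eigenvector $x$, which is only possible because of the (true, but unproved in your proposal) identity $\Vert D_Ax\Vert=\Vert D_{A^*}x\Vert$, equivalently $\Vert Ax\Vert=\Vert A^*x\Vert$, valid for eigenvectors of $\Re A$; and even after arranging this exact cancellation, one still has to verify the full operator inequality involving the block $C=\Re\bigl(\text{lower-right corner of }U_\omega\bigr)$, which is not a free parameter but is determined by $\omega$. Until that verification is carried out (or replaced by Gau--Li--Wu's own construction), the proof has a gap precisely at its load-bearing step.
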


It is to be noted that there exists a normal contraction $A$ for which the $k$-rank numerical range of all unitary dilations of $A$ contains $\Lambda_k(A)$ as a proper subset for $1\leq k\leq\infty.$ See \cite{DM}. The following is the first main theorem of this paper. It generalizes Theorem \ref{Gau-Li-Wu theorem}.

\begin{theorem}\label{main theorem 1}
Let $A\in\mathcal{B}\mathcal{(H)}$ be a contraction and $k\in\mathbb{N}$. Then
$$\overline{\Lambda_k(A)}=\bigcap\left\{\overline{\Lambda_k(U)}: U\in\mathcal{B}\mathcal{(H\oplus H)} \text{ is a unitary dilation of A}\right\}.$$
Moreover, $$\overline{\Lambda_\infty(A)}=\bigcap\left\{\overline{\Lambda_\infty(U)}: U\in\mathcal{B}\mathcal{(H\oplus H)} \text{ is a unitary dilation of A}\right\}$$ provided $\Lambda_\infty(A)$ is non-empty.
\end{theorem}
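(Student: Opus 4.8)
The plan is to prove the two inclusions separately, reducing the hard direction to an eigenvalue-optimization over unitary dilations by means of a half-plane (Li--Sze type) description of the higher rank numerical range.

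\emph{The easy inclusion.} For any unitary dilation $U\in\mathcal B(\mathcal H\oplus\mathcal H)$ and any $\lambda\in\Lambda_k(A)$ witnessed by a rank-$k$ projection $P$ with $\mathrm{ran}\,P\subseteq\mathcal H$ and $PAP=\lambda P$, I would view $P$ as a projection on $\mathcal H\oplus\mathcal H$ and check directly that $PUP=\lambda P$. Indeed, for $x,y\in\mathrm{ran}\,P\subseteq\mathcal H$ the relation $A=P_{\mathcal H}U|_{\mathcal H}$ and $y\in\mathcal H$ give $\langle Ux,y\rangle=\langle P_{\mathcal H}Ux,y\rangle=\langle Ax,y\rangle$, so $\langle PUPx,y\rangle=\langle Ax,y\rangle=\lambda\langle x,y\rangle$. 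Hence $\Lambda_k(A)\subseteq\Lambda_k(U)$ for every such $U$, and taking closures and intersecting yields $\overline{\Lambda_k(A)}\subseteq\bigcap_U\overline{\Lambda_k(U)}$. The identical computation works for an infinite-rank projection, which settles the easy inclusion for $k=\infty$ and also shows $\Lambda_\infty(A)\subseteq\Lambda_\infty(U)$, so $\Lambda_\infty(U)\neq\emptyset$ whenever $\Lambda_\infty(A)\neq\emptyset$.

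\emph{Reduction of the reverse inclusion.} For finite $k$ I would record the infinite-dimensional analogue of the Li--Sze description: for every bounded $T$, $\overline{\Lambda_k(T)}=\bigcap_{\theta}\{\mu:\Re(e^{i\theta}\mu)\le s_k(\Re(e^{i\theta}T))\}$, where for self-adjoint $H$ one sets $s_k(H)=\inf\{\sup_{\|x\|=1,\,x\perp\mathcal N}\langle Hx,x\rangle:\dim\mathcal N\le k-1\}$, the generalized $k$-th largest spectral value (reducing to $\lambda_k$ in finite dimensions). The containment $\subseteq$ is elementary from the definition of $\Lambda_k$, while the reverse is the substantive part and rests on Woerdeman's convexity together with the non-emptiness results quoted earlier; I would prove it as a lemma. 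Granting this, and applying it to each dilation, the reverse inclusion follows once I produce, for every direction $\theta$ and every $\epsilon>0$, a unitary dilation $U_{\theta,\epsilon}$ with $s_k(\Re(e^{i\theta}U_{\theta,\epsilon}))\le s_k(\Re(e^{i\theta}A))+\epsilon$: then $\bigcap_U\overline{\Lambda_k(U)}\subseteq\{\mu:\Re(e^{i\theta}\mu)\le s_k(\Re(e^{i\theta}A))+\epsilon\}$ for all $\theta,\epsilon$, and letting $\epsilon\downarrow0$ and intersecting over $\theta$ lands the intersection inside $\overline{\Lambda_k(A)}$.

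\emph{The core construction and the main obstacle.} Rotating by $e^{i\theta}$ (which preserves contractions and unitary dilations) I may take $\theta=0$; write $H=\Re A$ and $c=s_k(H)$. Since $H$ is the compression of $\Re U$ to $\mathcal H$, a max-min comparison gives $s_k(\Re U)\ge c$ for every dilation, so the content is to \emph{attain} $c$ up to $\epsilon$. I would fix $\mathcal M\subseteq\mathcal H$ of codimension $\le k-1$ with $H\le(c+\epsilon)I$ on $\mathcal M$, put $\mathcal N=\mathcal M^{\perp}$ (so $\dim\mathcal N\le k-1$), and seek a unitary dilation $U=\begin{pmatrix}A&B\\C&D\end{pmatrix}$ with (i) the coupling $\tfrac12(B+C^*)$ mapping into the finite-dimensional space $\mathcal N$, and (ii) $\Re D\le(c+\epsilon)I$. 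A one-line form computation then gives $\Re U\le(c+\epsilon)I$ on $\mathcal M\oplus\mathcal H$, a subspace of codimension $\le k-1$, whence $s_k(\Re U)\le c+\epsilon$. The \textbf{main difficulty} is realizing (i) and (ii) inside a genuine unitary dilation: the Halmos dilation has $\Re D=H$ and nonzero coupling $\tfrac12(D_A-D_{A^*})$, failing both (for normal $A$ it doubles the top eigenvalue, which is precisely why the naive estimate breaks when $k\ge2$). I expect to resolve this by splitting off the finite-dimensional spectral subspace $\mathcal H_+$ of $H$ above $c+\epsilon$ (dimension $\le k-1$), exploiting the relation $A_{-+}=-A_{+-}^{*}$ forced by block-diagonality of $H$, and building the dilation through a Gau--Li--Wu--type finite construction on the corner coupled to $\mathcal H_+$ while dilating the remaining part (where $\Re A$ is already $\le c+\epsilon$) so as to keep $\Re D$ small and route the coupling into $\mathcal N$; the parametrization of unitary dilations by the defect spaces is the natural bookkeeping device, and the $\epsilon$ together with the closures absorbs the infinite-dimensional subtleties (non-attainment of the infimum defining $s_k$, and essential spectrum).

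\emph{The case $k=\infty$.} Assuming $\Lambda_\infty(A)\neq\emptyset$, I would reduce to the finite case through the identity $\overline{\Lambda_\infty(T)}=\bigcap_{k\in\mathbb N}\overline{\Lambda_k(T)}$, valid whenever $\Lambda_\infty(T)\neq\emptyset$ and proved by a diagonal/compactness argument anchored by a nonempty $\Lambda_\infty$. Since $\Lambda_\infty(A)\subseteq\Lambda_\infty(U)$ for every dilation by the first paragraph, this identity applies to $A$ and to every $U$ simultaneously, giving $\overline{\Lambda_\infty(A)}=\bigcap_k\overline{\Lambda_k(A)}=\bigcap_k\bigcap_U\overline{\Lambda_k(U)}=\bigcap_U\bigcap_k\overline{\Lambda_k(U)}=\bigcap_U\overline{\Lambda_\infty(U)}$, where the middle equality is the already established finite-$k$ theorem.
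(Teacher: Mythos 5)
Your overall framework coincides with the paper's: the easy inclusion via compressions, the reduction of the hard inclusion to a direction-by-direction eigenvalue statement through the Li--Poon--Sze half-plane description (Theorems \ref{geomertic description for operator} and \ref{geomertic description for operator2}, which the paper also quotes rather than reproves), and the derivation of the $k=\infty$ case from the finite-$k$ case anchored by non-emptiness of $\Lambda_\infty(A)$. All of that is sound. The problem is that the entire substance of the theorem lies in the step you yourself label ``the main difficulty'' and then only sketch: producing, for each $\theta$ and $\epsilon>0$, a unitary dilation $U\in\mathcal{B}(\mathcal{H}\oplus\mathcal{H})$ with $\lambda_k(\Re(e^{i\theta}U))\leq\lambda_k(\Re(e^{i\theta}A))+\epsilon$. ``I expect to resolve this by splitting off\dots'' is a plan, not a proof; in the paper this step is precisely Proposition \ref{contractive dilation} and Theorem \ref{key theorem}, and it occupies most of Section 3.

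Worse, the structural target you set for yourself is provably unattainable in general, so your plan cannot be completed as stated. You require a unitary dilation $U=\begin{pmatrix} A & B\\ C & D \end{pmatrix}$ whose coupling $\tfrac12(B+C^*)$ has range in a finite-dimensional subspace $\mathcal{N}$, i.e.\ has finite rank. Unitarity gives $BB^*=I-AA^*=D_{A^*}^2$ and $C^*C=I-A^*A=D_A^2$; writing $T=B+C^*$, so that $B=-C^*+T$, one gets $D_{A^*}^2=BB^*=C^*C-C^*T^*-TC+TT^*$, and hence $D_{A^*}^2-D_A^2$ has rank at most $3\,\mathrm{rank}(T)$. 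So finite-rank coupling forces $D_{A^*}^2-D_A^2$ to have finite rank. Now take $A=\tfrac12 S$ with $S$ the unilateral shift of infinite multiplicity: then $D_A^2=\tfrac34 I$ while $D_{A^*}^2=\tfrac34 I+\tfrac14 P$ with $P$ an infinite-rank projection, so \emph{no} unitary dilation of $A$ on $\mathcal{H}\oplus\mathcal{H}$ satisfies your condition (i), for any $k$ and any direction $\theta$ (rotating $A$ does not change the defect operators); the same estimate, applied to $T=P_{\mathcal{N}}T+P_{\mathcal{M}}T$, also rules out the relaxed version in which $\Vert P_{\mathcal{M}}(B+C^*)\Vert$ is merely small. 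Yet for this $A$ the hard inclusion is genuinely needed: $\overline{\Lambda_k(A)}=\tfrac12\overline{\mathbb{D}}$ by Lemma \ref{k-rank numerical range of shift} and (P1), so the dilations in the intersection must realize half-plane bounds at level $\tfrac12+\epsilon<1$, and your construction is the only mechanism you propose for producing them. The paper avoids any structural condition on the coupling: Proposition \ref{contractive dilation} produces a \emph{contractive} dilation with prescribed lower-left corner preserving $\lambda_k(Z+Z^*)$ exactly (finite sections, the Gau--Li--Wu theorem, and the limiting Lemma \ref{1}); Theorem \ref{key theorem} iterates this countably often with lower-left corner $\sqrt{I-A^*A}$ to obtain an \emph{isometric} dilation preserving $\lambda_k(U+U^*)$, and then rules out a proper isometry because that would force $\overline{\Lambda_k(U)}=\overline{\mathbb{D}}$ (Corollary \ref{properisometry}), contradicting $\lambda_k(U+U^*)<2$. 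Your core construction needs to be replaced by an argument of this kind.
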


An example is given that Theorem \ref{main theorem 1} is not true whenever the $\infty$-rank numerical range is empty (Example \ref{counter example}).

The following is the second main theorem of this paper which complements the previous theorem. It states that if both defect numbers of a contraction are same and finite then one can restrict the intersection to a smaller family of unitay dilations. It generalizes (\ref{Bercovici-Timotin theorem}). A result of {Bercovici and Timotin} on unitary $N$-dilations is used while proving it.

\begin{theorem}\label{main theorem 2}
Let $A\in\mathcal{B}\mathcal{(H)}$ be a contraction with $d_A=d_{A^*}=N<\infty$ and $1\leq k\leq\infty$. Then
$$\overline{\Lambda_k(A)}=\bigcap\left\{\overline{\Lambda_k(U)}: U\text{ is a unitary } N\text{-dilation of A to } \mathcal{H}\oplus\mathbb{C}^N\right\}.$$
\end{theorem}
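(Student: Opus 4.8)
The plan is to prove the two inclusions separately, the nontrivial one being $\bigcap\{\overline{\Lambda_k(U)}\}\subseteq\overline{\Lambda_k(A)}$. For the easy inclusion I would use monotonicity of the higher rank numerical range under compression: if $\lambda\in\Lambda_k(A)$ is witnessed by an orthonormal family $\{f_j\}_{j=1}^k\subseteq\mathcal{H}$ with $\langle Af_j,f_r\rangle=\lambda\delta_{j,r}$, then since $A=P_{\mathcal H}U|_{\mathcal H}$ and the $f_j$ lie in $\mathcal H$ we get $\langle Uf_j,f_r\rangle=\langle Af_j,f_r\rangle=\lambda\delta_{j,r}$, so $\lambda\in\Lambda_k(U)$. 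Taking closures and intersecting over all unitary $N$-dilations gives $\overline{\Lambda_k(A)}\subseteq\bigcap\overline{\Lambda_k(U)}$, and this argument works uniformly for $1\le k\le\infty$.

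For the hard inclusion I would pass to the half-plane description. Writing $\lambda_k(H)$ for the $k$-th largest point of a bounded self-adjoint $H$ in the Courant--Fischer sense, $\lambda_k(H)=\inf_{\dim F=k-1}\lambda_1\!\left(P_{F^\perp}H|_{F^\perp}\right)$, the operator version of (\ref{geometric description}) reads $\overline{\Lambda_k(A)}=\bigcap_{\theta}\{\mu:\Re(e^{i\theta}\mu)\le\lambda_k(\Re(e^{i\theta}A))\}$. Hence a point $\lambda\notin\overline{\Lambda_k(A)}$ is separated in some direction; replacing $A,\lambda$ by $e^{i\theta}A,e^{i\theta}\lambda$ (which leaves the contraction, dilation and defect structure intact and merely rotates $\Lambda_k$) I may assume $\Re\lambda>\lambda_k(\Re A)$. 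It then suffices to produce a single unitary $N$-dilation $U$ with $\lambda_k(\Re U)<\Re\lambda$, for then $\lambda$ fails the half-plane inequality for $U$ in the direction $0$ and $\lambda\notin\overline{\Lambda_k(U)}$. Note Cauchy interlacing for an $N$-dimensional self-adjoint dilation gives $\lambda_k(\Re U)\ge\lambda_k(\Re A)$ for every such $U$, so the target is an $N$-dilation that nearly attains this lower bound.

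To reduce the $k$-th eigenvalue condition to a top-eigenvalue condition I would use Courant--Fischer on $\Re A$: choosing $\epsilon>0$ with $\lambda_k(\Re A)<\Re\lambda-\epsilon$, the spectral subspace $\mathcal H_1=E_{\Re A}((\Re\lambda-\epsilon,1])\mathcal H$ has dimension $r\le k-1$, while on its orthogonal complement $\mathcal H_2$ the compression $A_2=P_{\mathcal H_2}A|_{\mathcal H_2}$ satisfies $\lambda_1(\Re A_2)\le\Re\lambda-\epsilon<\Re\lambda$, i.e. $\lambda\notin\overline{W(A_2)}$. This is now a genuine rank-one separation, and here the Bercovici--Timotin theorem (\ref{Bercovici-Timotin theorem})---more precisely their explicit description of the family of unitary $N$-dilations---enters: it supplies a unitary dilation whose numerical range still avoids $\lambda$, and the task is to splice this with the at most $k-1$ ``large'' directions coming from $\mathcal H_1$ so that the resulting $U$ is a unitary $N$-dilation of all of $A$ with at most $k-1$ eigenvalues of $\Re U$ above $\Re\lambda$.

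The main obstacle is exactly this splicing within the minimal dimension budget. The compression $A_2$ need not have equal or $N$-matched defect numbers, the natural compressions $P_{M\oplus\mathbb C^N}U|_{M\oplus\mathbb C^N}$ of a dilation are not unitary, and $A$ couples $\mathcal H_1$ to $\mathcal H_2$, so one cannot simply quote Bercovici--Timotin on $A_2$ and reassemble. The correct route is to run the Bercovici--Timotin parametrization of the unitary $N$-dilations of $A$ itself and to select the free unitary parameter so that the $N$ added dimensions create no eigenvalue of $\Re U$ beyond $\Re\lambda$, thereby upgrading their rank-one construction (which removes the single top eigenvalue) to one controlling the $k$-th eigenvalue. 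Finally I would obtain the case $k=\infty$ by letting $k\to\infty$ in the finite-rank statements, using $\lambda_\infty(\Re(e^{i\theta}A))=\sup\sigma_{\mathrm{ess}}(\Re(e^{i\theta}A))$ together with the non-emptiness of $\Lambda_\infty(A)$, exactly as in the proof of Theorem \ref{main theorem 1}.
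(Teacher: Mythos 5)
Your reduction of the theorem matches the paper's: the easy inclusion by compression, the rotation trick, and Theorem \ref{geomertic description for operator} reduce everything to producing, for each direction, a unitary $N$-dilation $U$ with $\lambda_k(\Re(U))$ at most (something close to) $\mu:=\lambda_k(\Re(A))$. But at exactly this point your proposal stops being a proof: ``run the Bercovici--Timotin parametrization and select the free unitary parameter so that the $N$ added dimensions create no eigenvalue of $\Re U$ beyond $\Re\lambda$'' is a restatement of the goal, not an argument, and it is precisely the content of the paper's key lemma (Lemma \ref{key lemma for proving main theorem 2}). Moreover, the mechanism you indicate would fail even if executed: keeping the $N$ new spectral dimensions of $\Re(U)$ strictly \emph{below} the cutoff makes them orthogonal to the spectral subspace $S$ of $\Re(U)$ above the cutoff, so intersecting $S$ with $\mathcal{H}$ can lose up to $N$ dimensions and one only gets $\dim S\le k+N-1$ --- exactly what Cauchy interlacing gives for free. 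The paper's idea is the opposite placement: by Lemma \ref{Berkovici and Timotin's unitary N-dilation} it builds $U_\epsilon$ having an eigenvalue $\lambda_\epsilon\in\mathbb{T}$ with $\Re(\lambda_\epsilon)=\mu+\epsilon$, i.e.\ \emph{exactly at the cutoff}, with multiplicity $N$. The $N$-dimensional eigenspace then lies inside $\mathrm{ran}\,E_\epsilon[\mu+\epsilon,\infty)$ (where $E_\epsilon$ is the spectral measure of $\Re(U_\epsilon)$), so it can be adjoined to $S$ \emph{before} intersecting with $\mathcal{H}$; if $\dim S\ge k$ this produces a $k$-dimensional subspace of $\mathcal{H}$ on which the compression of $\Re(A)$ is $\ge\mu+\epsilon$, contradicting $\lambda_k(\Re(A))=\mu$. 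Hence $\lambda_k(\Re(U_\epsilon))\le\mu+\epsilon$, and a norm-compactness argument as $\epsilon\to 0$ (or, for the theorem itself, simply taking $\epsilon$ small enough to preserve the strict separation from $\xi$) finishes. None of this --- the at-the-cutoff placement, the dimension count, the limit --- appears in your proposal; your decomposition of $\Re A$ into $\mathcal{H}_1\oplus\mathcal{H}_2$ and the splicing plan are abandoned by your own admission.

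Separately, your $k=\infty$ case is wrong in scope: you invoke non-emptiness of $\Lambda_\infty(A)$ ``exactly as in the proof of Theorem \ref{main theorem 1}'', but Theorem \ref{main theorem 2} carries no such hypothesis --- this is precisely where it is stronger than Theorem \ref{main theorem 1} (cf.\ Example \ref{counter example}). The paper handles $\Lambda_\infty(A)=\emptyset$ by a separate argument with no analogue in your plan: if $P$ is an infinite-rank projection with $PUP=\lambda P$ for some unitary $N$-dilation $U$, then $\mathrm{ran}\,P\cap\mathcal{H}$ is still infinite dimensional since $\mathcal{H}$ has finite codimension $N$ in $\mathcal{H}\oplus\mathbb{C}^N$, and compressing gives $\lambda\in\Lambda_\infty(A)$; hence $\Lambda_\infty(U)=\emptyset$ for every unitary $N$-dilation and both sides of the asserted equality are empty. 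Without this case, your argument for $k=\infty$ is incomplete even granting the finite-$k$ construction.
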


There are so many other generalizations of the numerical range like the higher rank numerical range. One of them is the $C$-numerical range. Let $A,C\in M_n$. The \textit{$C$-numerical range} of $A$, denoted by $W_C(A)$, is defined as
\begin{align}
W_C(A):=\left\{\text{tr}(CU^*AU): U\in M_n \text{ is a unitary matrix}\right\}.
\end{align}
Let $C$ be unitarily similar to $\textbf{c}=\text{diag}(c_1,c_2,\cdots,c_n)$ where $c_j\in\mathbb{C}$ for all $1\leq j\leq n$. As $W_\textbf{c}(A)$ is unitarily invariant, we obtain
\begin{align}\label{definition of C-numerical range}
W_\textbf{c}(A)=\left\{\sum\limits_{j=1}^n c_j\langle Ae_j,e_j\rangle:\{e_j\}_{j=1}^n \text{ is an orthonormal basis of }\mathbb{C}^n\right\}.
\end{align} 
Note that if $\textbf{c}=\text{diag}(1,0,\cdots,0)$ then $W_\textbf{c}(A)=W(A)$. Westwick \cite{W} has shown that $W_\textbf{c}(A)$ is convex if $c_j\in\mathbb{R}$ for all $1\leq j\leq n$. In addition, he gave an example which shows that for $(c_1,c_2,\cdots,c_n)\in\mathbb{C}^n$ with $n\geq 3$, the $\textbf{c}$-numerical range may fail to be convex. The reader may refer to the survey article \cite{L} for more details on the $C$-numerical range. We have investigated Theorem \ref{main theorem 1}-and Theorem \ref{main theorem 2}-type relations for the $\textbf{c}$-numerical range and have obtained the answer in negative, which is the final theorem of this paper.

\begin{theorem}\label{main theorem 3}
There exist $A, \textbf{c}$ for which the intersection of the closure of the $\textbf{c}\oplus \textbf{0}$-numerical ranges of all unitary dilations of $A$ contains $\overline{W_\textbf{c}(A)}$ as a proper subset.
\end{theorem}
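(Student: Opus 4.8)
The plan is to exhibit an explicit counterexample, and the cleanest choice is $A=\begin{pmatrix} 0 & 1\\ 0 & 0\end{pmatrix}$, a contraction with $d_A=d_{A^*}=1$, together with $\mathbf{c}=(1,1)$, so that $\mathbf{c}\oplus\mathbf{0}=\mathrm{diag}(1,1,0)$ on the dilation space. First I would record the trivial inclusion $\overline{W_\mathbf{c}(A)}\subseteq\bigcap_U\overline{W_{\mathbf{c}\oplus\mathbf{0}}(U)}$: for any orthonormal basis $\{e_1,e_2\}$ of $\mathbb{C}^2$, regarded inside the dilation space, one has $\langle Ue_j,e_j\rangle=\langle Ae_j,e_j\rangle$ because the top-left block of every dilation is $A$. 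Next I would compute the left-hand side: since $\mathbf{c}=(1,1)$, $W_\mathbf{c}(A)=\{\langle Ae_1,e_1\rangle+\langle Ae_2,e_2\rangle\}=\{\mathrm{tr}\,A\}=\{0\}$, a single point. The entire content is therefore to prove that the intersection on the right is strictly larger than $\{0\}$.

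The engine is a support-function description, available because $\mathbf{c}$ is real, so that every $W_{\mathbf{c}\oplus\mathbf{0}}(U)$ is convex (Westwick \cite{W}). For a decreasingly sorted real weight, von Neumann's (Ky Fan's) trace inequality gives, in direction $\theta$, the support value $\max\sum_j c_j\langle \Re(e^{i\theta}B)f_j,f_j\rangle=\sum_j c_j^{\downarrow}\,\lambda_j(\Re(e^{i\theta}B))$, a sum of top eigenvalues. For $A$ this yields $h(\theta)=\lambda_1(\Re(e^{i\theta}A))+\lambda_2(\Re(e^{i\theta}A))=\tfrac12-\tfrac12=0$ for all $\theta$, reconfirming $\overline{W_\mathbf{c}(A)}=\{0\}$; for a dilation $U$ it yields $h_U(\theta)=\nu_1(\Re(e^{i\theta}U))+\nu_2(\Re(e^{i\theta}U))$, the sum of the two largest eigenvalues. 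Since the intersection is cut out by the half-planes $\Re(e^{i\theta}z)\le g(\theta):=\inf_U h_U(\theta)$, I would next exploit a symmetry: $e^{i\theta}A$ is unitarily equivalent to $A$ via a diagonal unitary, and conjugating dilations by the corresponding unitary shows $g(\theta)\equiv g(0)=:r$ is constant in $\theta$. Because $\sup_\theta\Re(e^{i\theta}z)=|z|$, the intersection is then exactly the closed disc $\{z:|z|\le r\}$, and the theorem reduces to the single inequality $r>0$.

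To prove $r>0$ I would show $\nu_1(\Re U)+\nu_2(\Re U)>0$ for every unitary dilation $U$. Cauchy interlacing against the compression $A$ gives $\nu_1\ge\lambda_1(\Re A)=\tfrac12$ and $\nu_2\ge\lambda_2(\Re A)=-\tfrac12$, so the sum is $\ge0$, with equality only if $\nu_1=\tfrac12$ and $\nu_2=-\tfrac12$. The key step rules this out: writing $U=K+iL$ with $K=\Re U$, $L=\Im U$, unitarity forces $[K,L]=0$ together with $K^2+L^2=I$, and the top-left blocks of $K,L$ equal $\Re A,\Im A$. The vector $v_+=(e_1+e_2)/\sqrt2$ satisfies $\langle Kv_+,v_+\rangle=\langle (\Re A)v_+,v_+\rangle=\tfrac12$, so if $\nu_1=\tfrac12$ then $v_+$ spans the (necessarily one-dimensional) top eigenspace of $K$, whence $[K,L]=0$ makes $v_+$ an eigenvector of $L$ with real eigenvalue $\alpha$. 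Computing the off-diagonal entry of the top-left block of $L$ in the basis $v_\pm=(e_1\pm e_2)/\sqrt2$ of eigenvectors of $\Re A$, using $Lv_+=\alpha v_+$ and $L=L^*$, yields a real number, whereas $\Im A$ forces that entry to be $\pm i/2$; this contradiction gives $r>0$. Over any fixed-dimensional family of dilations the dilations form a compact set and $U\mapsto\nu_1+\nu_2$ is continuous, so the strict pointwise inequality upgrades to $r=\min>0$; for the three-dimensional dilations one even gets the explicit value $r=\tfrac12$, since there every dilation has the three cube roots of unity as eigenvalues up to a common rotation, making each $W_{\mathbf{c}\oplus\mathbf{0}}(U)$ an equilateral triangle inscribed in the unit circle, whose intersection over all rotations is the inscribed disc of radius $\tfrac12$. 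The main obstacle is exactly this uniform strict positivity: a single cleverly chosen dilation can make its two-eigenvalue sum arbitrarily near the interlacing floor, and only the rigidity imposed by $[K,L]=0$ against the prescribed off-diagonal block of $A$ prevents it from reaching $0$.
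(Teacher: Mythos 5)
Your choice of example and most of the machinery match the paper: the paper also takes $A=\begin{pmatrix}0&1\\0&0\end{pmatrix}$, $\mathbf{c}=I_2$, so that $W_{\mathbf{c}}(A)=\{\mathrm{tr}\,A\}=\{0\}$, and also reduces everything to the scalar quantity $\beta_U=\lambda_1(\Re U)+\lambda_2(\Re U)$ (via its Lemma on $\overline{W_{\mathbf{c}}}$ of a self-adjoint operator being $[\alpha,\beta]$). Your explicit rotation-invariance argument, showing the intersection is exactly the disc of radius $r=\inf_U\beta_U$, is correct and is in fact something the paper leaves implicit when it passes from the assumed equality to the equality of real parts. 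Your commutator rigidity ($U=K+iL$ unitary forces $[K,L]=0$; if $\lambda_1(K)=\tfrac12$ then $v_+=(e_1+e_2)/\sqrt2$ spans the top eigenspace, hence is an eigenvector of $L$, contradicting $\langle Lv_+,v_-\rangle=\langle \Im(A)v_+,v_-\rangle=\pm i/2$) is a valid proof that $\beta_U>0$ for \emph{each individual} unitary dilation $U$, and it even survives in infinite dimensions since the equality case forces the supremum to be attained.

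The gap is the very last step: pointwise strict positivity does not give $r>0$. The theorem's intersection runs over \emph{all} unitary dilations, on spaces of every finite dimension and on infinite-dimensional spaces, so there is no single compact parameter set; your compactness upgrade yields only $r_n:=\min\{\beta_U:U\text{ an }n\text{-dimensional dilation}\}>0$ for each fixed $n$, and a priori $\inf_n r_n$ could be $0$, in which case the intersection would be exactly $\{0\}$ and the theorem false. (Your own closing sentence, that a cleverly chosen dilation can push $\beta_U$ arbitrarily near the interlacing floor, describes precisely the scenario you must \emph{exclude}, not merely the scenario where equality is never attained; also the $3$-dimensional computation $r=\tfrac12$ only bounds the full intersection from \emph{above}.) What is missing is a dimension-free quantitative bound, and this is exactly what the paper's proof supplies: since $U$ dilates $A$, the disc $W(A)=\{|z|\le\tfrac12\}$ must lie in $\overline{W(U)}=\overline{\mathrm{conv}}\,\sigma(U)$; but if $\beta_U<\epsilon$, interlacing pins $\lambda_1(\Re U)\in[\tfrac12,\tfrac12+\epsilon)$ and $\lambda_2(\Re U)\in[-\tfrac12,-\tfrac12+\epsilon)$, so $\sigma(U)$ is one point near $e^{\pm i\pi/3}$ together with a subset of the arc $\{\Re z\le -\tfrac12+\epsilon\}$, whose convex hull has a supporting chord passing within $O(\epsilon)$ of the origin (at $\epsilon=0$ the chord from $e^{i\pi/3}$ to $e^{-2\pi i/3}$ passes through $0$) and hence cannot contain the disc of radius $\tfrac12$ once $\epsilon$ is small — a contradiction valid for every dilation in every dimension, giving a uniform $\epsilon_0>0$ with $r\ge\epsilon_0$. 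Your commutator argument cannot be upgraded to this because it is an exact-equality rigidity statement with no modulus; to repair the proof you need either such a quantitative geometric step or a reduction of arbitrary dilations to a uniformly bounded-dimensional family, neither of which is provided.
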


Let us end this section by listing some basic properties of the higher rank numerical range:
\begin{itemize}
\item[(P1)] $\Lambda_k(\alpha A+\beta I)=\alpha\Lambda_k(A)+\beta$,  for $\alpha, \beta\in \mathbb{C}$.
\item[(P2)] $\Lambda_k(A^*)=\overline{\Lambda_k(A)}$.
\item[(P3)] $\Lambda_k(A\oplus B)\supseteq \Lambda_k(A)\cup\Lambda_k(B)$.
\item[(P4)] $\Lambda_k(U^*AU)=\Lambda_k(A)$, for any unitary $U\in\mathcal{B}\mathcal{(H)}$.
\item[(P5)] If $A_0$ is a compression of $A$ on a subspace $\mathcal{H}_0$ of $\mathcal{H}$ such that dim$(\mathcal{H}_0)\geq k$ then $\Lambda_k(A_0)\subseteq\Lambda_k(A)$.
\end{itemize}

\section{Preliminaries}

Throughout this paper, we denote $\mathcal{H}$, an infinite dimensional separable Hilbert space and $\mathcal{B(\mathcal{H})}$, the algebra of all bounded linear maps acting on $\mathcal{H}$. Let $A\in\mathcal{B(\mathcal{H})}$ be a normal operator and $E$ be the unique spectral measure associated with $A$ defined on the Borel $\sigma$-algebra in $\mathbb{C}$ supported on $\sigma(A)$.  Suppose $1\leq k\leq\infty$. Let
\begin{align*}
\mathcal{S}_k:=\left\{H: H\text{ is a half closed-half plane in }\mathbb{C} \text{ with dim ran }E(H)<k \right\}.
\end{align*}
Suppose $V_k(A):=\bigcap\limits_{H\in\mathcal{S}_k} H^c$. Dey and Mukherjee \cite{DM} proved the following, which extends (\ref{HRNR for normal matrix}) for a normal operator acting on an infinite-dimensional Hilbert space.

\begin{theorem}[Theorem 3.2, \cite{DM}]\label{Dey-Mukherjee theorem}
	Let $A\in\mathcal{B(\mathcal{H})}$ be normal and $1\leq k\leq\infty$. Then
	\begin{align*}
	\Lambda_k(A)=\bigcap\limits_{V\in\mathscr{V}_k}W(V^*AV)=V_k(A),
	\end{align*}
	where $\mathscr{V}_k$ is the set of all isometries $V:\mathcal{H}\rightarrow\mathcal{H}$ such that codimension of ran $V$ is less than $k$. 
\end{theorem}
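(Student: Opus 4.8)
The plan is to establish the two stated equalities by proving the cycle of inclusions
$$\Lambda_k(A)\subseteq\bigcap_{V\in\mathscr{V}_k}W(V^*AV)\subseteq V_k(A)\subseteq\Lambda_k(A),$$
after which all three sets coincide. By property (P1) it suffices, for each claimed membership, to treat the point $\lambda=0$, since translating $A\mapsto A-\lambda I$ preserves normality and merely shifts $E$. Write $E$ for the spectral measure of $A$, so that for a unit vector $y$ the scalar measure $\mu_y:=\langle E(\cdot)y,y\rangle$ is a probability measure with $\langle Ay,y\rangle=\int z\,d\mu_y(z)$. For the first inclusion, let $0\in\Lambda_k(A)$ be witnessed by a rank-$k$ projection $P$ with $PAP=0$, and put $\mathcal{M}=\text{ran}\,P$; then $\dim\mathcal{M}=k$ and, since $Px=x$ forces $PAx=0$, we get $\langle Ax,x\rangle=0$ for every unit $x\in\mathcal{M}$. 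Fixing $V\in\mathscr{V}_k$, a dimension count gives $\dim(\mathcal{M}\cap\text{ran}\,V)\ge\dim\mathcal{M}-\text{codim}\,\text{ran}\,V\ge 1$ (and is infinite when $k=\infty$), so some unit $y=Vx$ lies in $\mathcal{M}\cap\text{ran}\,V$; then $\langle V^*AVx,x\rangle=\langle Ay,y\rangle=0$, whence $0\in W(V^*AV)$ for every such $V$.

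For the second inclusion I would argue the contrapositive. If $0\notin V_k(A)$, there is a half-plane $H\in\mathcal{S}_k$ with $0\in H$ and $\dim\text{ran}\,E(H)<k$. Set $\mathcal{N}=\text{ran}\,E(H^c)$, so $\mathcal{N}^{\perp}=\text{ran}\,E(H)$ has dimension $<k$; hence $\mathcal{N}$ has finite codimension $<k$, is nonzero, and satisfies $\dim\mathcal{N}=\dim\mathcal{H}$. For a unit $y\in\mathcal{N}$ the measure $\mu_y$ is supported in the open half-plane $H^c$, so $\langle Ay,y\rangle$ lies strictly inside $H^c$ and in particular $\langle Ay,y\rangle\ne 0$. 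Choosing any isometry $V$ with $\text{ran}\,V=\mathcal{N}$ yields $V\in\mathscr{V}_k$ with $0\notin W(V^*AV)=\{\langle Ay,y\rangle: y\in\mathcal{N},\ \|y\|=1\}$, so $0\notin\bigcap_V W(V^*AV)$. This argument is uniform in $1\le k\le\infty$, using only that the cut-off codimension stays below $k$.

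The third inclusion is the crux. Assume $0\in V_k(A)$, i.e. every closed half-plane $H$ with $0\in H$ has $\dim\text{ran}\,E(H)\ge k$. The key reduction is the observation that if $f_1,\dots,f_k$ are unit vectors with $f_j\in\text{ran}\,E(S_j)$ for pairwise disjoint Borel sets $S_1,\dots,S_k$, then the spectral subspaces reduce $A$, so $Af_j\in\text{ran}\,E(S_j)$ and therefore $\langle Af_i,f_j\rangle=0$ for $i\ne j$ automatically, while the $f_j$ are orthonormal. Hence if moreover $\langle Af_j,f_j\rangle=0$ for each $j$, the rank-$k$ projection $P=\sum_j\langle\cdot,f_j\rangle f_j$ satisfies $PAP=0$ and $0\in\Lambda_k(A)$. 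Thus it suffices to partition the support of $E$ into pairwise disjoint Borel sets $S_1,\dots,S_k$ so that each carries spectral mass and each restricted compression $A|_{\text{ran}\,E(S_j)}$ has $0$ in its numerical range, i.e. $0$ is realized as the barycenter $\int z\,d\mu$ of some $\mu=\langle E(\cdot)f_j,f_j\rangle$ supported in $S_j$.

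The hard part will be the existence of this partition, which is where the hypothesis $0\in V_k(A)$ is used in full. The statement ``every closed half-plane through $0$ carries spectral dimension at least $k$'' is a halfspace-depth (Tukey-depth) condition for the spectral measure, and what is needed is a Tverberg-type partition in which $0$ lies in the convex hull of each of the $k$ pieces. I would first settle the finitely supported model case by a planar discrete-geometry argument — partition a point set of halfspace depth $\ge k$ about $0$ into $k$ groups each containing $0$ in its convex hull, counting atoms with multiplicity through $\dim\text{ran}\,E$ — noting that any continuous spectrum only helps, since a half-plane meeting it already has infinite spectral dimension. Two technical points require care: (i) upgrading ``$0\in\text{conv}$'' to ``$0$ attained in the numerical range of the piece'', which I would arrange by subdividing so that $0$ lies in the relative interior of $\text{conv}(\text{supp}\,E|_{S_j})$ or is itself an atom, guaranteeing a genuine unit vector $f_j$ with $\langle Af_j,f_j\rangle=0$; and (ii) the case $k=\infty$, where one instead exhausts the now infinite-dimensional spectral mass in each direction to build a countable disjoint family. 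Combining the three inclusions gives $\Lambda_k(A)=\bigcap_{V\in\mathscr{V}_k}W(V^*AV)=V_k(A)$.
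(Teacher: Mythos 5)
Your first two inclusions are essentially correct. The inclusion $\Lambda_k(A)\subseteq\bigcap_{V\in\mathscr{V}_k}W(V^*AV)$ by the dimension count $\dim(\mathcal{M}\cap\operatorname{ran}V)\geq k-\operatorname{codim}\operatorname{ran}V>0$ is fine (and does not even need normality), and the contrapositive argument for $\bigcap_V W(V^*AV)\subseteq V_k(A)$ is sound up to one imprecision: the complement $H^c$ of a half closed-half plane is not an open half-plane but again a half closed-half plane (e.g.\ the complement of $\{\Im z<0\}\cup[0,\infty)$ is $\{\Im z>0\}\cup(-\infty,0)$), so the barycenter of $\mu_y$ need not lie ``strictly inside'' an open half-plane; one needs the two-step argument that either $\mu_y$ charges the open half-plane, making the relevant linear functional strictly positive, or $\mu_y$ is concentrated on the included ray, and in either case $\langle Ay,y\rangle\neq 0$. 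That is fixable. (Note also that this paper does not prove the statement at all --- it imports it as Theorem 3.2 of \cite{DM} --- so your proposal must stand entirely on its own.)

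The genuine gap is in your third inclusion, and it is not merely that the ``hard part'' is left as a plan: the reduction you start from is wrong. You replace the hypothesis $0\in V_k(A)$, which quantifies over \emph{half closed-half planes} (an open half-plane together with a closed boundary ray), by the Tukey-depth condition ``every \emph{closed} half-plane through $0$ carries spectral dimension at least $k$.'' The latter is strictly weaker, and the paper's own Example~\ref{counter example} shows your reduced hypothesis cannot imply $0\in\Lambda_k(A)$: for $A=\bigoplus_{n\geq 2}\operatorname{diag}\bigl(-\tfrac1n,\tfrac{e^{i\pi/n}}{n}\bigr)$, the two eigenvalue sequences approach $0$ along the directions $\pi$ and $0^+$, so every closed half-plane containing $0$ contains a tail of one of them and hence has infinite-dimensional spectral subspace --- your depth condition holds at $0$ for every $1\leq k\leq\infty$. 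Yet $H_\circ=\{\Im z<0\}\cup[0,\infty)$ satisfies $\dim\operatorname{ran}E(H_\circ)=0$, so $0\notin V_k(A)=\Lambda_k(A)$ for every $k$; directly, $\int\Im z\,d\mu_f=0$ forces $\mu_f$ onto $(-\infty,0)$ and then $\int z\,d\mu_f<0$, so $0$ is unattainable even in $W(A)$. Consequently no Tverberg-type partition with $0$ attained in each piece can exist under the hypothesis you assume, and your flagged technical point (i) --- upgrading $0\in\operatorname{conv}$ to attainment --- is exactly where the half-closed ray structure must enter: when $0$ sits on the boundary line of a supporting half-plane of the spectrum, attainment is governed by which ray of that line carries spectral mass, which is precisely the information $\mathcal{S}_k$ encodes and closed half-planes do not. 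A correct proof must run the partition and attainment analysis with half closed-half planes throughout (distinguishing $0$ interior to the convex hull of the spectral support, $0$ on the boundary with two-sided ray mass, etc.), and must give a genuine construction for $k=\infty$; as written, the crux of the theorem is missing.
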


Let $A\in\mathcal{B(\mathcal{H})}$ be a self-adjoint operator and $k\in\mathbb{N}$. Define
\begin{align}
&\lambda_k(A):=\sup\left\{\lambda_k(V^*AV): V:\mathbb{C}^k\rightarrow\mathcal{H}\text{ is an isometry}\right\}.
\end{align}

Let us now observe the following.

\begin{lemma}\label{1}
	Let $A\in\mathcal{B(\mathcal{H})}$ be self-adjoint and $k\in\mathbb{N}$. Suppose $\{\mathcal{H}_n\}_{n=1}^\infty$ is an increasing sequence of closed subspaces of $\mathcal{H}$ with $\mathcal{H}=\overline{\bigcup\limits_{n\geq 1}\mathcal{H}_n}$ and $A_n=P_{\mathcal{H}_n}A|_{\mathcal{H}_n}$. Then 
	\begin{align*}
	\lim\limits_{n\to\infty}\lambda_k(A_n)=\lambda_k(A).
	\end{align*}
\end{lemma}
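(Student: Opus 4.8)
The plan is to prove the two inequalities $\limsup_n\lambda_k(A_n)\le\lambda_k(A)$ and $\liminf_n\lambda_k(A_n)\ge\lambda_k(A)$ separately; together with a monotonicity observation they yield the claimed limit. The first step is purely algebraic. Writing $\iota_n:\mathcal{H}_n\hookrightarrow\mathcal{H}$ for the inclusion, so that $A_n=\iota_n^*A\iota_n$, every isometry $V:\mathbb{C}^k\to\mathcal{H}_n$ produces an isometry $\iota_nV:\mathbb{C}^k\to\mathcal{H}$ satisfying $(\iota_nV)^*A(\iota_nV)=V^*\iota_n^*A\iota_nV=V^*A_nV$. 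Consequently $\lambda_k(A_n)$ is exactly the supremum of $\lambda_k(\tilde V^*A\tilde V)$ taken over those isometries $\tilde V:\mathbb{C}^k\to\mathcal{H}$ whose range is contained in $\mathcal{H}_n$. Because $\mathcal{H}_n\subseteq\mathcal{H}_{n+1}\subseteq\mathcal{H}$, enlarging $n$ enlarges the family of admissible isometries, so $\{\lambda_k(A_n)\}_n$ is non-decreasing and bounded above by $\lambda_k(A)$. Hence the limit $L:=\lim_n\lambda_k(A_n)$ exists and $L\le\lambda_k(A)$.

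For the reverse inequality, fix $\varepsilon>0$ and, by the definition of $\lambda_k(A)$ as a supremum, choose an isometry $W:\mathbb{C}^k\to\mathcal{H}$ with $\lambda_k(W^*AW)>\lambda_k(A)-\varepsilon/2$. Writing $w_j=We_j$ for the standard basis $\{e_j\}$ of $\mathbb{C}^k$, the vectors $w_1,\dots,w_k$ form an orthonormal set. The idea is to push $W$ into some $\mathcal{H}_n$. Since $\overline{\bigcup_n\mathcal{H}_n}=\mathcal{H}$ and the sequence increases, for $n$ large we may pick $u_j\in\mathcal{H}_n$ with $\|u_j-w_j\|$ as small as we wish for every $j$; applying Gram--Schmidt to $u_1,\dots,u_k$ gives an orthonormal set $v_1,\dots,v_k$ in $\mathcal{H}_n$ with $\|v_j-w_j\|$ still small, since the $w_j$ are already orthonormal and so the orthonormalization is itself a small perturbation. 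These vectors define an isometry $V:\mathbb{C}^k\to\mathcal{H}_n$ with $\|V-W\|$ arbitrarily small.

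It then remains to run a two-step continuity argument. First, for fixed $A$ the map $V\mapsto V^*AV$ is norm-continuous, so $\|V^*AV-W^*AW\|$ is controlled by $\|V-W\|$ and $\|A\|$. Second, for $k\times k$ self-adjoint matrices the smallest eigenvalue $\lambda_k(\cdot)$ is $1$-Lipschitz in the operator norm by Weyl's perturbation inequality. Combining these, for $n$ large enough one gets $\lambda_k(V^*AV)>\lambda_k(W^*AW)-\varepsilon/2>\lambda_k(A)-\varepsilon$, and since $\operatorname{ran}V\subseteq\mathcal{H}_n$ this forces $\lambda_k(A_n)>\lambda_k(A)-\varepsilon$. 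Letting $n\to\infty$ and then $\varepsilon\to0$ gives $L\ge\lambda_k(A)$, which with the first part finishes the proof. The only genuinely delicate point is this last approximation: I expect the main obstacle to be verifying that a small norm perturbation of the isometry, obtained by Gram--Schmidt from vectors of $\mathcal{H}_n$, moves both the compressed matrix and its smallest eigenvalue by at most $O(\varepsilon)$; everything else reduces to monotonicity and the definition of the supremum.
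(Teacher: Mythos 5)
Your proposal is correct and follows essentially the same route as the paper's proof: both establish that $\{\lambda_k(A_n)\}$ is non-decreasing and bounded above by $\lambda_k(A)$, and both prove the reverse inequality by taking a near-optimal $k$-dimensional compression of $A$ (equivalently, an isometry $W:\mathbb{C}^k\to\mathcal{H}$), approximating it by one whose range lies in some $\mathcal{H}_{n}$, and invoking Weyl's perturbation bound for eigenvalues of $k\times k$ self-adjoint matrices. If anything, your version is slightly tighter: the upper bound follows directly from the nested families of admissible isometries rather than the paper's contradiction argument, and your Gram--Schmidt perturbation supplies the detail behind the approximation step that the paper merely asserts.
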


\begin{proof}
	By interlacing theorem, we have, $\lambda_k(A_n)\leq\lambda_k(A_{n+1})$ for every $n\geq k$. Observe that $\lambda_k(A_n)\leq\lambda_k(A)$ for all $n\geq k$. Indeed, if not then there exists $n_{\circ}\geq k$ such that $\lambda_k(A)<\lambda_k(A_{n_\circ})$. Choose $A_{n_\circ}'=\text{diag}(\lambda_1(A_{n_\circ}),\cdots,\lambda_k(A_{n_\circ}))$. Then $A_{n_\circ}'$ is a $k$-by-$k$ compression of $A$ with $\lambda_k(A)<\lambda_k(A_{n_\circ}')$. It contradicts the definition of $\lambda_k(A)$. So, $\lambda_k(A_n)\leq\lambda_k(A)$ for $n\geq k$. Therefore, $\{\lambda_k(A_n)\}_{n\geq k}$ is a monotonically increasing sequence and bounded above. Hence $\lim\limits_{n\to\infty}\lambda_k(A_n)=\sup\limits_{n\geq k}\lambda_k(A_n)$. We claim that $\sup\limits_{n\geq k}\lambda_k(A_n)=\lambda_k(A)$.

   Given $\epsilon>0$, by the definition of $\lambda_k(A)$, there exists a $k$-by-$k$ compression $A'$ of $A$ such that $\lambda_k(A)-\frac{\epsilon}{2}<\lambda_k(A')$. Note that there exists $n_1\geq k$ and a $k$-by-$k$ compression $A''$ of $A_{n_1}$ such that $\Vert A'-A''\Vert<\frac{\epsilon}{2}$. It implies that $\vert\lambda_k(A')-\lambda_k(A'')\vert\leq\frac{\epsilon}{2}$. So, $\lambda_k(A)-\epsilon<\lambda_k(A'')$. Again, by interlacing theorem, $\lambda_k(A'')\leq\lambda_k(A_{n_1})$. Therefore, $\lambda_k(A)-\epsilon<\lambda_k(A_{n_1})$ for some $n_1\geq k$. Hence $\lim\limits_{n\to\infty}\lambda_k(A_n)=\lambda_k(A)$.
\end{proof}

Let $A\in\mathcal{B\mathcal{(H)}}$ and $k\in\mathbb{N}$. Define
\begin{align*}
&\Omega_k(A):=\bigcap\limits_{\xi\in[0,2\pi)}\left\{\mu\in\mathbb{C}:\Re({e^{i\xi}\mu})\leq\lambda_k(\Re({e^{i\xi}A}))\right\}.
\end{align*}

Let $\text{\textbf{Int}}(S)$ denote the relative interior of $S$ for $S\subseteq\mathbb{C}$. Li, Poon and Sze \cite{LPS} proved the following, which extends (\ref{geometric description}) for an operator.

\begin{theorem}[Theorem 2.1, \cite{LPS}]\label{geomertic description for operator}
	Let $A\in\mathcal{B}\mathcal{(H)}$ and $k\in\mathbb{N}$. Then
	\begin{align*}
	\text{\textbf{Int}}\left(\Omega_k(A)\right)\subseteq\Lambda_k(A)\subseteq\Omega_k(A)=\overline{\Lambda_k(A)}.
	\end{align*}
\end{theorem}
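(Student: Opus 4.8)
The plan is to establish the two substantive inclusions $\Lambda_k(A)\subseteq\Omega_k(A)$ and $\text{\textbf{Int}}(\Omega_k(A))\subseteq\Lambda_k(A)$, and then to read off the identity $\Omega_k(A)=\overline{\Lambda_k(A)}$ from convexity. The inclusion $\Lambda_k(A)\subseteq\Omega_k(A)$ is immediate: if $\lambda\in\Lambda_k(A)$, pick a rank-$k$ projection $P$ with $PAP=\lambda P$ and an isometry $V\colon\mathbb{C}^k\to\mathcal{H}$ onto $\operatorname{ran}P$; then $V^*AV=\lambda I_k$, so $V^*\Re(e^{i\xi}A)V=\Re(e^{i\xi}\lambda)I_k$ and hence $\lambda_k(\Re(e^{i\xi}A))\ge\Re(e^{i\xi}\lambda)$ for every $\xi$, which is exactly the statement $\lambda\in\Omega_k(A)$. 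Since $\Omega_k(A)$ is an intersection of closed half-planes, it is closed and convex, so this already gives $\overline{\Lambda_k(A)}\subseteq\Omega_k(A)$.

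For the reverse inclusion I would pass to finite-dimensional compressions. Fix an increasing sequence $\{\mathcal{H}_n\}$ of finite-dimensional subspaces with $\mathcal{H}=\overline{\bigcup_n\mathcal{H}_n}$ and put $A_n=P_{\mathcal{H}_n}A|_{\mathcal{H}_n}$. For these matrices the Li--Sze description (\ref{geometric description}) gives $\Lambda_k(A_n)=\Omega_k(A_n)$, and by (P5) one has $\Lambda_k(A_n)\subseteq\Lambda_k(A)$ as soon as $\dim\mathcal{H}_n\ge k$. The analytic engine is Lemma \ref{1} applied to each self-adjoint operator $\Re(e^{i\xi}A)$: the functions $g_n(\xi):=\lambda_k(\Re(e^{i\xi}A_n))$ increase monotonically in $n$ to $g(\xi):=\lambda_k(\Re(e^{i\xi}A))$. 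Each $g_n$ and $g$ is continuous in $\xi$ (the map $\xi\mapsto\Re(e^{i\xi}A)$ is norm-continuous and $\lambda_k$ is $1$-Lipschitz for the operator norm), so on the compact circle Dini's theorem promotes this monotone pointwise convergence to uniform convergence. Now if $\mu_0$ lies in the topological interior of $\Omega_k(A)$, there is a uniform slack $\delta>0$ with $\Re(e^{i\xi}\mu_0)+\delta\le g(\xi)$ for all $\xi$ (apply the membership $\mu_0+\delta e^{-i\xi}\in\Omega_k(A)$); uniform convergence then furnishes an $n$ with $\Re(e^{i\xi}\mu_0)\le g_n(\xi)$ for all $\xi$, i.e. $\mu_0\in\Omega_k(A_n)=\Lambda_k(A_n)\subseteq\Lambda_k(A)$. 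Thus every interior point of $\Omega_k(A)$ lies in $\Lambda_k(A)$.

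To finish, since $\Omega_k(A)$ is closed and convex it equals the closure of its relative interior whenever it is nonempty, so from $\text{\textbf{Int}}(\Omega_k(A))\subseteq\Lambda_k(A)\subseteq\Omega_k(A)$ I would take closures and obtain $\overline{\Lambda_k(A)}=\Omega_k(A)$. The step I expect to be the real obstacle is the degenerate case in which $\Omega_k(A)$ has empty topological interior -- a segment or a point -- so that $\text{\textbf{Int}}$ must be understood as relative interior. Here the argument above breaks down at a critical support direction $e^{-i\xi_0}$ normal to the line containing $\Omega_k(A)$: the two half-plane constraints at $\xi_0$ and $\xi_0+\pi$ are simultaneously tight, so no slack $\delta$ is available there, and in fact the finite compressions $A_n$ may all have $\Lambda_k(A_n)=\emptyset$, so (P5) yields nothing. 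The tightness forces $\lambda_k(\Re(e^{i\xi_0}A))=\Re(e^{i\xi_0}\mu_0)=-\lambda_k(-\Re(e^{i\xi_0}A))$, which confines the essential spectrum of $\Re(e^{i\xi_0}A)$ and makes its spectral subspace at the level $\Re(e^{i\xi_0}\mu_0)$ infinite-dimensional; the required rank-$k$ projection must then be produced directly inside that subspace rather than through the fixed compressions. Modulo this degenerate analysis, the compression-plus-Dini scheme delivers the theorem.
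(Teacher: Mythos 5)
The paper offers no proof of this statement; it is quoted verbatim from \cite{LPS} (Theorem 2.1 there), so your proposal has to stand on its own. The parts you actually prove are correct. The inclusion $\Lambda_k(A)\subseteq\Omega_k(A)$ via the compression $V^*AV=\lambda I_k$ is fine, and your main engine --- finite sections $A_n$, the Li--Sze equality $\Lambda_k(A_n)=\Omega_k(A_n)$, property (P5), Lemma \ref{1} for monotone pointwise convergence of $g_n(\xi)=\lambda_k(\Re(e^{i\xi}A_n))$ to $g(\xi)$, $1$-Lipschitz continuity of $\lambda_k$ in the operator norm, and Dini's theorem to upgrade to uniform convergence --- correctly shows that every point of the \emph{topological} interior of $\Omega_k(A)$ lies in $\Lambda_k(A)$. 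Hence the full statement follows whenever $\Omega_k(A)$ is empty or has nonempty interior in $\mathbb{C}$. This is a clean and legitimately more elementary route than the one in \cite{LPS} for that case.

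The gap is the degenerate case, which you name in your last paragraph but do not resolve, and it is not a removable technicality. The paper explicitly defines $\text{\textbf{Int}}$ as the \emph{relative} interior, so when $\Omega_k(A)$ is a nonempty segment or point the theorem still asserts both $\text{\textbf{Int}}\left(\Omega_k(A)\right)\subseteq\Lambda_k(A)$ and $\Omega_k(A)=\overline{\Lambda_k(A)}$, and your argument establishes neither; it does not even show $\Lambda_k(A)\neq\emptyset$ there. This case is not exotic: it occurs for every self-adjoint $A$ with $\Omega_k(A)\neq\emptyset$, where $\Omega_k(A)$ is a real interval with empty planar interior, so as written your proof already fails for diagonal operators. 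Nor is it routine to patch: the $k=\infty$ analogue (Theorem \ref{geomertic description for operator2}) shows that a degenerate $\Omega$ can be nonempty while $\Lambda$ is empty, so converting membership in a degenerate $\Omega_k$ into an actual rank-$k$ projection is precisely where finiteness of $k$ must be exploited. A complete argument (essentially the route in \cite{LPS}, whose title is about low-rank perturbations for a reason) must: rotate and translate so that $\Omega_k(A)\subseteq\mathbb{R}$ and show degeneracy forces $\lambda_k(\Im A)=\lambda_k(-\Im A)=0$; deduce that $\Im A$ has rank at most $2k-2$, so that $\ker(\Im A)$ has finite codimension and the compression of $A$ to it is self-adjoint; settle the self-adjoint case by building rank-$k$ projections from orthonormal vectors in the spectral subspaces $E(\mu,\infty)$ and $E(-\infty,\mu)$; and finally compare $\lambda_k$ of that finite-codimension compression with the endpoints of $\Omega_k(A)$, which is a genuine perturbation argument. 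Your closing sentences gesture at the first of these steps ("the spectral subspace ... infinite-dimensional") but supply none of them, so the proposal is an incomplete proof of the stated theorem.
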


Define $\Omega_{\infty}(A)=\bigcap\limits_{k\geq 1}\Omega_k(A)$.

\begin{theorem}[Theorem 5.2, \cite{LPS}]\label{geomertic description for operator2}
	Let $A\in\mathcal{B}\mathcal{(H)}$. Then
	\begin{align*}
	\text{\textbf{Int}}\left(\Omega_{\infty}(A)\right)\subseteq\Lambda_{\infty}(A)\subseteq\Omega_{\infty}(A).
	\end{align*}
	Moreover, $\overline{\Lambda_{\infty}(A)}=\Omega_{\infty}(A)$ if and only if $\Lambda_\infty(A)\neq\emptyset.$
\end{theorem}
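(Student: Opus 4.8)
The plan is to establish the two inclusions $\text{\textbf{Int}}(\Omega_{\infty}(A))\subseteq\Lambda_{\infty}(A)\subseteq\Omega_{\infty}(A)$ and then treat the equality $\overline{\Lambda_{\infty}(A)}=\Omega_{\infty}(A)$ separately. The inclusion $\Lambda_{\infty}(A)\subseteq\Omega_{\infty}(A)$ is immediate: restricting an infinite orthonormal family witnessing $\lambda\in\Lambda_{\infty}(A)$ to its first $k$ members shows $\Lambda_{\infty}(A)\subseteq\Lambda_k(A)\subseteq\Omega_k(A)$ for every finite $k$ (the last step by Theorem \ref{geomertic description for operator}), so $\Lambda_{\infty}(A)\subseteq\bigcap_k\Omega_k(A)=\Omega_{\infty}(A)$.

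For the inclusion $\text{\textbf{Int}}(\Omega_{\infty}(A))\subseteq\Lambda_{\infty}(A)$ I would argue by an explicit recursive construction. Let $\mu\in\text{\textbf{Int}}(\Omega_{\infty}(A))$; after the translation $A\mapsto A-\mu I$ (by (P1)) we may assume $\mu=0$, so there is $\delta>0$ with $B(0,\delta)\subseteq\Omega_k(A)$ for all $k$, which unwinds to $\lambda_k(\Re(e^{i\xi}A))\ge\delta$ for every $\xi$ and every $k\in\mathbb{N}$. I will build an orthonormal sequence $\{f_j\}$ with $\langle Af_j,f_r\rangle=0$ for all $j,r$. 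Suppose $f_1,\dots,f_n$ have been chosen; set $\mathcal{M}_n=\mathrm{span}\{f_i,Af_i,A^*f_i:i\le n\}$, a finite-dimensional space, and let $A_n'$ be the compression of $A$ to $\mathcal{M}_n^{\perp}$. Since $\Re(e^{i\xi}A_n')$ is the compression of $\Re(e^{i\xi}A)$ to a subspace of codimension $d_n:=\dim\mathcal{M}_n$, the interlacing theorem gives $\lambda_k(\Re(e^{i\xi}A_n'))\ge\lambda_{k+d_n}(\Re(e^{i\xi}A))\ge\delta$; hence $0\in\text{\textbf{Int}}(\Omega_1(A_n'))\subseteq W(A_n')$ by Theorem \ref{geomertic description for operator} with $k=1$. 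Choosing a unit vector $f_{n+1}\in\mathcal{M}_n^{\perp}$ with $\langle Af_{n+1},f_{n+1}\rangle=0$ completes the step; because $f_{n+1}\perp\{Af_i,A^*f_i\}$ the off-diagonal terms $\langle Af_{n+1},f_i\rangle$ and $\langle Af_i,f_{n+1}\rangle$ vanish automatically, so the resulting infinite family certifies $0\in\Lambda_{\infty}(A)$.

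For the equality I would first record that $\Omega_{\infty}(A)$ is always non-empty: the sets $\Omega_k(A)=\overline{\Lambda_k(A)}$ are non-empty, compact (being closed and contained in the bounded set $\overline{W(A)}$), convex and decreasing, so $\Omega_{\infty}(A)=\bigcap_k\Omega_k(A)\ne\emptyset$. The implication $(\overline{\Lambda_{\infty}(A)}=\Omega_{\infty}(A))\Rightarrow(\Lambda_{\infty}(A)\ne\emptyset)$ is then clear, since the right-hand side is non-empty. Conversely, assume $\Lambda_{\infty}(A)\ne\emptyset$; we must show $\Omega_{\infty}(A)\subseteq\overline{\Lambda_{\infty}(A)}$, the reverse inclusion being already known. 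If $\text{\textbf{Int}}(\Omega_{\infty}(A))\ne\emptyset$ this is automatic: $\Omega_{\infty}(A)$ is a closed convex planar set equal to the closure of its interior, and $\text{\textbf{Int}}(\Omega_{\infty}(A))\subseteq\Lambda_{\infty}(A)$ by the first part, whence $\Omega_{\infty}(A)=\overline{\text{\textbf{Int}}(\Omega_{\infty}(A))}\subseteq\overline{\Lambda_{\infty}(A)}$.

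The genuine difficulty is the degenerate case $\text{\textbf{Int}}(\Omega_{\infty}(A))=\emptyset$, where $\Omega_{\infty}(A)$ is a point or a segment and the hypothesis $\Lambda_{\infty}(A)\ne\emptyset$ is actually used (this is precisely where $\Lambda_{\infty}$ can be empty, e.g. for a strictly positive compact operator). Here I would normalize by (P1) so that $\Omega_{\infty}(A)=[a,b]\subseteq\mathbb{R}$. The constraints in the directions $\xi=\pm\pi/2$ then force $\lim_k\lambda_k(\pm\Im A)=0$, i.e. $\sigma_{ess}(\Im A)=\{0\}$, so $K:=\Im A$ is compact and $A=R+iK$ with $R=\Re A$ self-adjoint and $[a,b]=[\inf\sigma_{ess}(R),\sup\sigma_{ess}(R)]$. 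Splitting a family witnessing a point $p\in\Lambda_{\infty}(A)\subseteq[a,b]$ into real and imaginary parts shows $p\in\Lambda_{\infty}(R)$, and combining orthonormal families drawn from two spectral bands of $R$ straddling a given $t\in(a,b)$ (using Theorem \ref{Dey-Mukherjee theorem} for the normal operator $R$) produces, for each such $t$, an infinite orthonormal set on which $R$ compresses exactly to $tI$. The point where I expect to work hardest is transferring this from $R$ to $A$: since $K$ is compact and orthonormal sequences are weakly null, the $K$-contribution to the compression can be driven to $0$, so the compression of $A$ becomes arbitrarily close to $tI$; extracting from this a bona fide point of $\Lambda_{\infty}(A)$ arbitrarily near $t$, rather than a merely approximate one, requires care (for instance, passing to an infinite-dimensional subspace on which the compressed compact part vanishes identically), and this is the main obstacle. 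Granting it, every $t\in(a,b)$ lies in $\overline{\Lambda_{\infty}(A)}$, so $\overline{\Lambda_{\infty}(A)}=[a,b]=\Omega_{\infty}(A)$, which finishes the argument.
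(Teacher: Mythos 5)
The paper itself contains no proof of this statement --- it is quoted from [LPS] as background --- so your proposal can only be judged against the statement itself. Judged that way, your first three pieces are sound: the restriction argument for $\Lambda_\infty(A)\subseteq\Omega_\infty(A)$, the recursive construction (each new vector chosen in $\mathcal{M}_n^{\perp}$ so that the off-diagonal entries vanish automatically, with interlacing preserving the lower bound $\delta$) for the interior inclusion, and the forward half of the ``moreover'' together with your Case A, where $\Omega_\infty(A)$ has nonempty topological interior and the hypothesis $\Lambda_\infty(A)\neq\emptyset$ is never needed. Note that throughout you are implicitly reading $\text{\textbf{Int}}$ as the interior in $\mathbb{C}$, whereas the paper defines $\text{\textbf{Int}}$ to be the \emph{relative} interior; this is not pedantry, since under the relative-interior reading even the first inclusion fails, as the example below shows.

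The degenerate case that you leave open (``Granting it\dots'') is a genuine gap, and it cannot be closed, because the implication $\Lambda_\infty(A)\neq\emptyset\Rightarrow\overline{\Lambda_\infty(A)}=\Omega_\infty(A)$ is false as stated. Take $\mathcal{H}=\mathcal{M}\oplus\mathcal{N}$ with both summands infinite-dimensional and $A=0_{\mathcal{M}}\oplus\bigl(I_{\mathcal{N}}+i\Gamma\bigr)$ with $\Gamma=\text{diag}(1,\frac{1}{2},\frac{1}{3},\dots)$. Then $\Re(e^{i\xi}A)=0\oplus(\cos\xi\, I-\sin\xi\,\Gamma)$, so $\inf_k\lambda_k(\Re(e^{i\xi}A))=\max(0,\cos\xi)$ for every $\xi$, which gives $\Omega_\infty(A)=[0,1]$. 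On the other hand, if $PAP=\lambda P$ with $P$ of infinite rank, then $P(\Im A)P=\Im(\lambda)P$; since $\Im A=0\oplus\Gamma$ is compact this forces $\Im\lambda=0$ and $P(0\oplus\Gamma)P=0$, and since $\Gamma$ is strictly positive this forces $\text{ran}\,P\subseteq\mathcal{M}$, whence $PAP=0$ and $\lambda=0$. So $\Lambda_\infty(A)=\{0\}\neq\emptyset$ while $\overline{\Lambda_\infty(A)}=\{0\}\subsetneq[0,1]=\Omega_\infty(A)$; moreover $\text{relint}(\Omega_\infty(A))=(0,1)\not\subseteq\Lambda_\infty(A)$, killing the relative-interior reading of the first inclusion as well. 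This example sits exactly in your degenerate framework: $R=\Re A$ has essential spectrum $\{0,1\}$, $K=\Im A$ is compact, and every $t\in(0,1)$ admits orthonormal systems on which $A$ compresses to matrices arbitrarily close to $tI$ (mixtures of vectors from $\mathcal{M}$ with far-out eigenvectors of $\Gamma$), yet no \emph{exact} scalar compression exists away from $0$, because the compressed compact part is strictly positive whenever the $\mathcal{N}$-components are nonzero --- and $\overline{\Lambda_\infty(A)}$ only sees exact ones. So the obstacle you flagged as ``the main obstacle'' is not a technical difficulty but an actual failure of the step, and with it of the quoted statement: either the paper's transcription of [LPS, Theorem 5.2] is inaccurate, or the ``moreover'' requires a stronger hypothesis such as $\text{int}\,\Omega_\infty(A)\neq\emptyset$ (under which your Case A already yields the full conclusion). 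Your strategy proves everything that is actually true in the statement.
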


Let $r>0$. Denote $D(0,r):=\{z\in\mathbb{C}:\vert z\vert<r\}$, the disc centred at $0$ with radius $r$ and $\mathbb{D}=D(0,1)$, the open unit disc. Let us end this section with the following lemmas and corollary.

\begin{lemma}[Theorem 2.1, \cite{HG}]\label{k-rank numerical range of finite dimensional shift}
	Suppose $n\geq 2$ and $1\leq k\leq n$. Let $S_n$ be the $n$-dimensional unilateral shift on $\mathbb{C}^n$. Then
	\begin{align*}
	\Lambda_k(S_n)=\begin{cases}
	\overline{D(0,\cos\frac{k\pi}{n+1})},  & \text{if $1\leq k\leq[\frac{n+1}{2}]$} \\
	\emptyset, & \text{if $[\frac{n+1}{2}]<k\leq n.$}
	\end{cases}
	\end{align*}
\end{lemma}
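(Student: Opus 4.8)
The plan is to exploit the rotational symmetry of the nilpotent shift to collapse the two–dimensional problem to a single eigenvalue computation, and then read off the answer from the Li–Sze description (\ref{geometric description}).

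First I would show that $\Lambda_k(S_n)$ is invariant under rotations about the origin. Writing $S_n e_j=e_{j+1}$ for $j<n$ and $S_n e_n=0$, let $D_\phi=\text{diag}(1,e^{i\phi},\ldots,e^{(n-1)i\phi})$. A direct computation on the basis vectors gives $D_\phi^*S_nD_\phi=e^{-i\phi}S_n$, equivalently $e^{i\phi}S_n=D_\phi S_n D_\phi^*$. Combining properties (P1) and (P4) yields $e^{i\phi}\Lambda_k(S_n)=\Lambda_k(e^{i\phi}S_n)=\Lambda_k(S_n)$ for every $\phi$, so $\Lambda_k(S_n)$ is a rotation–invariant convex set, i.e. a (possibly degenerate, possibly empty) disc centred at $0$.

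Next, taking real parts in $e^{i\phi}S_n=D_\phi S_nD_\phi^*$ shows that $\Re(e^{i\phi}S_n)=D_\phi\,\Re(S_n)\,D_\phi^*$ is unitarily equivalent to $\Re(S_n)$, whence $\lambda_k(\Re(e^{i\phi}S_n))=\lambda_k(\Re(S_n))$ for all $\phi$. Setting the constant $c:=\lambda_k(\Re(S_n))$ and substituting into (\ref{geometric description}),
\[
\Lambda_k(S_n)=\bigcap_{\theta\in[0,2\pi)}\left\{\mu\in\mathbb{C}:\Re(e^{i\theta}\mu)\le c\right\}=\left\{\mu\in\mathbb{C}:|\mu|\le c\right\},
\]
since $\max_\theta\Re(e^{i\theta}\mu)=|\mu|$. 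This intersection is $\overline{D(0,c)}$ when $c\ge 0$ and empty when $c<0$, so everything reduces to computing $c$.

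It remains to evaluate $c=\lambda_k(\Re(S_n))$, where $\Re(S_n)=\tfrac12(S_n+S_n^*)$ is the $n\times n$ symmetric tridiagonal matrix with zero diagonal and $\tfrac12$ on the two adjacent diagonals. Its eigenvalues are the classically known values $\cos\frac{j\pi}{n+1}$ for $j=1,\ldots,n$, which one verifies by checking that the vector with entries $\sin\frac{mj\pi}{n+1}$ solves the eigenvector recurrence and vanishes at the boundary indices $m=0,n+1$. Ordering these decreasingly gives $\lambda_k(\Re(S_n))=\cos\frac{k\pi}{n+1}$. Finally $\cos\frac{k\pi}{n+1}\ge 0$ precisely when $k\le\frac{n+1}{2}$, i.e. when $k\le[\frac{n+1}{2}]$, producing the stated disc; for $k>[\frac{n+1}{2}]$ one has $c<0$ and the range is empty. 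When $n$ is odd and $k=\frac{n+1}{2}$ the value is $c=0$, so $\overline{D(0,0)}=\{0\}$ stays consistent with the first case. I expect the only mildly delicate points to be the eigenvalue computation for the tridiagonal matrix $\Re(S_n)$ and the floor–function bookkeeping at the threshold $k=[\frac{n+1}{2}]$; the symmetry reduction that drives the argument is routine once the diagonal conjugation $D_\phi^*S_nD_\phi=e^{-i\phi}S_n$ is written down.
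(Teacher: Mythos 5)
Your proof is correct. The paper itself offers no proof of this lemma---it is imported verbatim as Theorem 2.1 of \cite{HG}---and your argument (rotation invariance of $\Lambda_k(S_n)$ via the diagonal conjugation $D_\phi^*S_nD_\phi=e^{-i\phi}S_n$, reduction through the Li--Sze formula (\ref{geometric description}) to the single constant $c=\lambda_k(\Re(S_n))$, and the classical eigenvalues $\cos\frac{j\pi}{n+1}$ of the tridiagonal matrix $\Re(S_n)$) is essentially the standard proof found in that reference, including the correct reading of the threshold case $n$ odd, $k=\frac{n+1}{2}$, where $\overline{D(0,\cos\frac{k\pi}{n+1})}$ is to be understood as the closed disc of radius $0$, namely $\{0\}$, which is indeed $\Lambda_k(S_n)$ there.
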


\begin{lemma}\label{k-rank numerical range of shift}
	Let $S$ be a shift operator with any multiplicity acting on an infinite dimensional Hilbert space. Then $\Lambda_k(S)=\mathbb{D}$ for all $k\in\mathbb{N}$.
\end{lemma}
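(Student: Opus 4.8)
The plan is to establish the two inclusions $\mathbb{D}\subseteq\Lambda_k(S)$ and $\Lambda_k(S)\subseteq\mathbb{D}$ separately, obtaining the first from the finite-dimensional truncations of $S$ and the second from a Cauchy--Schwarz argument, and then to conclude equality.

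For the inclusion $\mathbb{D}\subseteq\Lambda_k(S)$, the key observation is that, whatever the multiplicity of $S$, a genuine copy of the $n$-dimensional nilpotent shift appears as a compression. Since $S$ is a non-unitary isometry, $\ker S^*\neq\{0\}$; I would fix a unit vector $e\in\ker S^*$ and note that $\{e,Se,\dots,S^{n-1}e\}$ is orthonormal (because $S^*S=I$ and $S^{*m}e=0$ for $m\geq 1$), and that the compression of $S$ to its span is exactly $S_n$. Thus $S_n$ is a compression of $S$ for every $n$. Taking $n$ large enough that $1\leq k\leq[\frac{n+1}{2}]$, property (P5) together with Lemma \ref{k-rank numerical range of finite dimensional shift} yields
$$\overline{D\!\left(0,\cos\tfrac{k\pi}{n+1}\right)}=\Lambda_k(S_n)\subseteq\Lambda_k(S).$$
As $k$ is fixed, $\cos\frac{k\pi}{n+1}$ increases to $1$ while staying strictly below $1$ for every admissible $n$, so the union of these closed discs over all such $n$ is precisely the open disc $\mathbb{D}$. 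This gives $\mathbb{D}\subseteq\Lambda_k(S)$.

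For the reverse inclusion I would invoke the monotonicity chain $\Lambda_k(S)\subseteq\Lambda_1(S)=W(S)$ from the introduction and argue that $W(S)\subseteq\mathbb{D}$. Because $S$ is an isometry, Cauchy--Schwarz gives $|\langle Sx,x\rangle|\leq\|Sx\|\,\|x\|=1$ for every unit vector $x$, and equality would force $Sx$ to be a unimodular scalar multiple of $x$; since the shift (of any multiplicity) has no eigenvalues, the inequality is strict, so $W(S)\subseteq\mathbb{D}$. Combining this with the previous paragraph forces $\Lambda_k(S)=\mathbb{D}$.

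The step I expect to require the most care is the lower bound: one must verify cleanly that $S_n$ occurs as a compression uniformly in the multiplicity parameter (handled by the wandering-vector construction above), and one must correctly track the open-versus-closed distinction when passing from the closed truncation discs $\overline{D(0,\cos\frac{k\pi}{n+1})}$ to the open disc $\mathbb{D}$ in the limit. The upper bound, by contrast, rests only on the standard fact that a shift is a point-spectrum-free isometry, so I anticipate no difficulty there.
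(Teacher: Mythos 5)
Your proof is correct, and it follows the same two-step skeleton as the paper (lower bound from finite-dimensional shift compressions via Lemma \ref{k-rank numerical range of finite dimensional shift} and (P5); upper bound from Cauchy--Schwarz equality plus $\sigma_p(S)=\emptyset$), but the way you obtain the lower bound is genuinely different and simpler. The paper first writes $S=S_0\otimes I_{\mathcal{H}}$ with $S_0$ the scalar unilateral shift, embeds $S_n$ as a compression of $S_0$, and then needs a separate transfer argument to pass from $\Lambda_k(S_0)$ to $\Lambda_k(S)$: given an isometry $X:\mathbb{C}^k\to l^2(\mathbb{N})$ with $X^*S_0X=\lambda I_k$, it tensors with a fixed unit vector $h_0$ to produce an isometry $Y$ with $Y^*SY=\lambda I_k$. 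Your wandering-vector construction --- take a unit $e\in\ker S^*$ and check that $\{e,Se,\dots,S^{n-1}e\}$ is orthonormal with compression exactly $S_n$ --- puts $S_n$ directly inside $S$ regardless of multiplicity, so the tensor-product transfer step disappears entirely; it also avoids invoking any concrete model of $S$. Your upper bound is the same argument as the paper's, merely routed through $\Lambda_k(S)\subseteq W(S)$ and the strict inequality $|\langle Sx,x\rangle|<1$ rather than applied to a rank-$k$ orthonormal family; both reduce to the fact that Cauchy--Schwarz equality would manufacture an eigenvector of $S$. The open-versus-closed bookkeeping in your limit $\bigcup_n \overline{D\bigl(0,\cos\tfrac{k\pi}{n+1}\bigr)}=\mathbb{D}$ is handled correctly, so there is no gap; what your route buys is a shorter, multiplicity-free argument, while the paper's route makes the dependence on the scalar shift $S_0$ explicit.
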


\begin{proof}
	Without loss of generality, we may consider $S:l^2(\mathcal{H})\rightarrow l^2(\mathcal{H})$ such that 
	\begin{align*}
	S\left((x_0,x_1,...)\right)=(0,x_0,x_1,...),\qquad (x_0,x_1,...)\in l^2(\mathcal{H}),
	\end{align*}
	where $\mathcal{H}$ is a Hilbert space of dimension same as the multiplicity of $S$. So, $S=S_0\otimes I_\mathcal{H}$, where $S_0$ is the unilateral shift on $l^2(\mathbb{N})$. Let $S_n$ be the unilateral shift on $\mathbb{C}^n$. As $S_n$ is a compression of $S_0$, by Lemma \ref{k-rank numerical range of finite dimensional shift} and (P5), we obtain $\overline{D(0,\cos(\frac{k\pi}{n+1}))}=\Lambda_k(S_n)\subseteq\Lambda_k(S_0)$ for $1\leq k\leq\left[\frac{n+1}{2}\right]$. Taking $n\rightarrow \infty$, we get $\mathbb{D}\subseteq\Lambda_k(S_0)$. Let us now show that $\Lambda_k(S_0)\subseteq\Lambda_k(S)$. Let $\lambda\in\Lambda_k(S_0)$. Then there exists an isometry $X:\mathbb{C}^k\rightarrow l^2(\mathbb{N})$ such that $X^*S_0X=\lambda I_k$. Let $h_0\in\mathcal{H}$ with $\Vert h_\circ\Vert=1$. Define $Y:\mathbb{C}^k\rightarrow l^2(\mathbb{N})\otimes \mathcal{H}\approx l^2(\mathcal{H})$ such that $Yh=Xh\otimes h_0, \; h\in\mathbb{C}^k$. Let $h\in\mathbb{C}^k$ with $\Vert h\Vert=1$. Now, 
	\begin{align*}
	\Vert Yh\Vert^2=\langle Xh\otimes h_0,Xh\otimes h_0\rangle=\langle Xh,Xh\rangle\langle h_0,h_0\rangle=\Vert Xh\Vert^2\Vert h_0\Vert^2=1.
	\end{align*}
	So, $Y$ is an isometry. We claim that $Y^*SY=\lambda I_k$. Let $h\in\mathbb{C}^k$ with $\Vert h\Vert=1$. Observe,
	\begin{align*}
	\langle Y^*SY h,h\rangle=\langle (S_0\otimes I_K)Xh\otimes h_0,Xh\otimes h_0\rangle=\langle X^*S_0Xh,h\rangle=\lambda.
	\end{align*}
	So, $Y^*SY=\lambda I_k$. Therefore, $\lambda\in\Lambda_k(S)$. Hence $\mathbb{D}\subseteq\Lambda_k(S_0)\subseteq\Lambda_k(S)$.

	Let $\lambda\in\Lambda_k(S)$ with $\vert\lambda\vert=1$. Then there exists an orthonormal set $\{f_j\}_{j=1}^k$ such that $\langle Sf_j,f_r\rangle=\lambda\delta_{j,r}$ for $1\leq j,r\leq k$. Note,
	\begin{align*}
	1=\vert\lambda\vert=\vert\langle Sf_j,f_j\rangle\vert\leq\Vert Sf_j\Vert\Vert f_j\Vert=1
	\end{align*}
	for all $1\leq j\leq k$. As Cauchy-Schwarz inequality is being attained for all $1\leq j\leq k$, $\lambda$ is an eigenvalue of $S$ with multiplicity $k$. It contradicts that $\sigma_p(S)=\emptyset$. Hence $\Lambda_k(S)=\mathbb{D}$.
\end{proof}

\begin{corollary}\label{properisometry}
Suppose $V$ is a proper isometry. Then $\overline{\Lambda_k(V)}=\overline{\mathbb{D}}.$
\end{corollary}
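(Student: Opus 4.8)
The plan is to reduce the statement to Lemma \ref{k-rank numerical range of shift} by invoking the Wold decomposition. Since $V$ is a \emph{proper} isometry, it is an isometry that fails to be unitary, so by the Wold decomposition we may write $V=S\oplus U$ on $\mathcal{H}=\mathcal{H}_1\oplus\mathcal{H}_2$, where $S$ is a unilateral shift of some multiplicity and $U$ is unitary. The shift summand $S$ is genuinely present precisely because $V$ is not surjective; this is the one place where the hypothesis ``proper'' is used. Moreover, any unilateral shift acts on an infinite-dimensional space by construction, so $S$ is of exactly the form to which Lemma \ref{k-rank numerical range of shift} applies.

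With this decomposition in hand, the lower bound is immediate. By Lemma \ref{k-rank numerical range of shift}, $\Lambda_k(S)=\mathbb{D}$ for every $k\in\mathbb{N}$. Applying the monotonicity property (P3) to $V=S\oplus U$ gives $\Lambda_k(V)\supseteq\Lambda_k(S)=\mathbb{D}$, and taking closures yields $\overline{\mathbb{D}}\subseteq\overline{\Lambda_k(V)}$.

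For the reverse inclusion I would use that an isometry is a contraction, so $\|V\|=1$ and its numerical range lies in the closed unit disc; since $\Lambda_k(V)\subseteq W(V)\subseteq\overline{\mathbb{D}}$, passing to closures gives $\overline{\Lambda_k(V)}\subseteq\overline{\mathbb{D}}$. Combining the two inclusions produces $\overline{\Lambda_k(V)}=\overline{\mathbb{D}}$.

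The only real content is the structural reduction. I do not expect a genuine obstacle: once the Wold decomposition supplies a nontrivial shift summand, the conclusion follows by feeding it into the already-established shift computation together with (P3) and the trivial containment of the numerical range of a contraction in $\overline{\mathbb{D}}$. The single point requiring care is the observation that ``proper isometry'' is exactly what forces the shift part to be nonzero (otherwise $V$ would be unitary and no $\mathbb{D}$ lower bound would be available).
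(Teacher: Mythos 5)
Your proof is correct and follows essentially the same route as the paper: Wold decomposition of the proper isometry to extract a nontrivial shift summand, Lemma \ref{k-rank numerical range of shift} together with (P3) for the lower bound $\mathbb{D}\subseteq\Lambda_k(V)$, and the trivial containment $\Lambda_k(V)\subseteq W(V)\subseteq\overline{\mathbb{D}}$ for the upper bound. The paper compresses all of this into a single chain of inclusions, but the content is identical.
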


\begin{proof}
As $V$ is a proper isometry, by Wold decomposition (c.f. \cite{NF}), we can write $V=V_0\oplus S$, where $V_0$ is unitary and $S$ is a shift operator.  So, by (P3) and Lemma \ref{k-rank numerical range of shift}, we have,
\begin{align}\label{c}
\overline{\mathbb{D}}\supseteq W(V)\supseteq\cdots\supseteq\Lambda_k(V)=\Lambda_k(V_0\oplus S)\supseteq\Lambda_k(V_0)\cup\Lambda_k(S)\supseteq\Lambda_k(S)=\mathbb{D}. 
\end{align}
Taking closure in both sides of (\ref{c}), we get $\overline{\Lambda_k(V)}=\overline{\mathbb{D}}$.
\end{proof}

\section{Proof of Theorem \ref{main theorem 1}}

Let $A\in M_n$ and $U=\begin{pmatrix}
    A & C\\
    B & D\\
  \end{pmatrix}\in M_{2n}$ be a unitary dilation of $A$. By polar decomposition of a matrix and using the fact that $U$ is unitary, we obtain $B=U_1\sqrt{I-A^*A},\; C=-\sqrt{I-AA^*}U_2$ and $D=U_1A^*U_2$, where $U_1,U_2\in M_n$ are unitary. Then
\begin{align*}
(I\oplus U_1^*)U(I\oplus U_1)=\begin{pmatrix}
    A & -\sqrt{I-A^*A}U_{\circ}\\
    \sqrt{I-A^*A} & A^*U_{\circ}\\
  \end{pmatrix},
\end{align*}
where $U_{\circ}=U_2U_1$. Hence we may take any unitay dilation $U$ of $A$ in the form
\begin{align}\label{3}
U=\begin{pmatrix}
    A & *\\
    \sqrt{I-A^*A} & *\\
  \end{pmatrix}.
\end{align}

Let us begin with the following proposition.

\begin{proposition}\label{contractive dilation}
Let $A,B\in\mathcal{B}(\mathcal{H)}$ with $A^*A+B^*B\leq I_{\mathcal{H}}$ and $k\in\mathbb{N}$. Then there exist $C,D\in \mathcal{B}(\mathcal{H)}$ such that $$Z=\begin{pmatrix}
    A & C\\
    B & D\\
  \end{pmatrix}\in \mathcal{B}(\mathcal{H\oplus H)} $$ is a contractive dilation of A with $\lambda_k(Z+Z^*)=\lambda_k(A+A^*)$.
\end{proposition}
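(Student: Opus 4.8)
The plan is to prove the asserted equality in two halves, the easy inequality being automatic and all the content residing in an explicit choice of completion. First I would record that $\lambda_k(Z+Z^*)\ge\lambda_k(A+A^*)$ holds for \emph{every} contractive completion: since $A+A^*$ is the compression of $Z+Z^*$ to the summand $\mathcal H\oplus 0$, each isometry $V:\mathbb C^k\to\mathcal H$ occurring in the definition of $\lambda_k(A+A^*)$ is also an isometry into $\mathcal H\oplus\mathcal H$ with range in $\mathcal H\oplus 0$, and there $V^*(Z+Z^*)V=V^*(A+A^*)V$; hence the supremum defining $\lambda_k(Z+Z^*)$ runs over a larger family. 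So the whole task is to produce one contractive completion with $\lambda_k(Z+Z^*)\le 2m$, where $m:=\lambda_k\big(\tfrac{A+A^*}{2}\big)$. By the spectral meaning of $\lambda_k$ this is equivalent to asking that $Z+Z^*$ have at most $k-1$ dimensions on which it exceeds $2m$; equivalently, the completion must create no eigenvalue of the real part above the level already dictated by $A+A^*$ (which itself has at most $k-1$ such).

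Next I would delimit the available completions. Writing the given column as the contraction $R=\begin{pmatrix}A\\B\end{pmatrix}:\mathcal H\to\mathcal H\oplus\mathcal H$, Douglas' lemma shows the contractive completions are exactly those with second column $\begin{pmatrix}C\\D\end{pmatrix}=(I-RR^*)^{1/2}X$ for a contraction $X:\mathcal H\to\mathcal H\oplus\mathcal H$; equivalently, by Parrott's theorem one may pick any $C$ with $AA^*+CC^*\le I$ (so that the first block-row $\begin{pmatrix}A&C\end{pmatrix}$ is a contraction) and then complete with a norm-minimizing $D$. The free part of this data, in particular $\Im D$, does not affect $\Re Z$ and is what must be exploited. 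I would stress that the naive idea of uncoupling the real part by taking $C=-B^*$ — which would give $Z+Z^*=(A+A^*)\oplus(D+D^*)$ and reduce the claim to a direct-sum computation — is \emph{not} available in general, since it forces the $(1,1)$-block of $ZZ^*$ to equal $AA^*+B^*B$, which need not be $\le I$. Thus the completion must genuinely couple the two copies of $\mathcal H$, and $\Re Z$ will not be a direct summand of anything.

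The construction I would aim for is to realize a good completion as a unitary dilation whose spectrum is confined, off at most $k-1$ exceptional directions, to the arc $\{e^{i\theta}:\cos\theta\le m\}$: compressing such a unitary back to $\mathcal H$ reproduces $A$ while keeping $\lambda_k(\Re Z)=m$. Concretely I would pass to the reduced Halmos-type family of unitary dilations carrying a free unitary parameter as in the normalization $(\ref{3})$, and choose that parameter so as to push the spectrum of $\Re Z$ below $m$ except on the spectral subspace of $\Re A$ lying strictly above $m$, which is forced by $A$. The existence of a suitable parameter is the eigenvalue-assignment heart of the argument; I would establish it by reducing to finite dimensions via the monotone approximation Lemma~\ref{1} and the identity $\Omega_k(\cdot)=\overline{\Lambda_k(\cdot)}$ of Theorem~\ref{geomertic description for operator}, where the finite-dimensional Gau--Li--Wu theorem~\ref{Gau-Li-Wu theorem} and the half-plane formula $(\ref{geometric description})$ supply, for each finite compression, a dilation realizing the matching $k$-th eigenvalue, and then assembling these into a single bounded contraction on $\mathcal H\oplus\mathcal H$ by a compactness/limit argument.

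The step I expect to be the main obstacle is precisely this eigenvalue-assignment together with its passage to the limit: one must show that the coupling which contractivity forces between the two summands can be arranged so that $Z+Z^*$ gains no eigenvalue above $2m$, and that the finite-dimensional completions converge to a genuine contraction on the infinite-dimensional space without raising $\lambda_k$. The most delicate point inside this is the behaviour on the equality eigenspace of $\Re A$ at the level $m$: there the form $mI-\Re A$ degenerates, so the cross terms can no longer be absorbed by a Cauchy--Schwarz estimate against a strictly positive diagonal block. Controlling the completion on that eigenspace — for instance by forcing the relevant off-diagonal block to annihilate it, or by approximating with strict inequality and then taking limits — is what I expect to make or break the proof.
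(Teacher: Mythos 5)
Your outer skeleton (the easy inequality via interlacing, reduction to finite dimensions by finite sections and Lemma~\ref{1}, an appeal to Theorem~\ref{Gau-Li-Wu theorem}, and a compactness/limit assembly) matches the paper's, but the central step is missing, and the construction you aim at cannot produce what the proposition demands. The constraint is that the lower-left block must be the \emph{prescribed} operator $B$, while every member of the normalized Halmos family (\ref{3}) has lower-left block $\sqrt{I-A^*A}$; tuning the free unitary parameter in that family never touches this block, so no choice of parameter yields a completion of the given column. Worse, the completion you aim for cannot be unitary at all: if $Z=\begin{pmatrix} A & C\\ B & D \end{pmatrix}$ is unitary, then $Z^*Z=I$ forces $A^*A+B^*B=I$, whereas the hypothesis only gives $A^*A+B^*B\leq I$ --- this clashes with your own (correct) observation that the prescribed column is merely contractive. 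What your third paragraph actually outlines is a proof of Theorem~\ref{key theorem} (a unitary dilation of $A$ with matching $\lambda_k$); but in the paper that theorem is \emph{deduced from} Proposition~\ref{contractive dilation}, applied iteratively with $B=\sqrt{I-A^*A}$, so your route inverts the logical order and leaves the statement with general prescribed $B$ unproved.

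The idea you are missing is the bridge between the Gau--Li--Wu unitary and the prescribed column. Since $A^*A+B^*B\leq I$ gives $B^*B\leq I-A^*A$, Douglas factorization yields a contraction $J$ with $B=J\sqrt{I-A^*A}$. Take the finite-dimensional Gau--Li--Wu dilation $U$ in the normalized form (\ref{3}) with $\lambda_k(U+U^*)=\lambda_k(A+A^*)$, set $\widetilde{U}=U\oplus(-I)$, and compress by the explicit isometry
\begin{align*}
V=\begin{pmatrix} I & 0\\ 0 & J^*\\ 0 & -\sqrt{I-JJ^*} \end{pmatrix},
\qquad Z=V^*\widetilde{U}V .
\end{align*}
A direct computation shows the first column of $Z$ is exactly $\begin{pmatrix} A\\ B \end{pmatrix}$, and $Z$ is \emph{simultaneously} a dilation of $A$ and a compression of $\widetilde{U}$; two applications of Cauchy interlacing then pin down
$\lambda_k(A+A^*)\leq\lambda_k(Z+Z^*)\leq\lambda_k(\widetilde{U}+\widetilde{U}^*)=\lambda_k(U+U^*)=\lambda_k(A+A^*)$,
the last-but-one equality holding because the $-I$ summand only appends spectrum at $-2$. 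In particular, no eigenvalue-assignment argument and no analysis of the equality eigenspace of $\Re A$ at the level $m$ is needed: all spectral control is done by interlacing, which is exactly what makes the proof work where your proposal anticipates an obstacle. The infinite-dimensional case then proceeds essentially as you sketched: apply the above to the finite sections $A_n,B_n$, pad by zeros, pass to a weak operator limit $Z$ (whose first column is automatically $\begin{pmatrix} A\\ B \end{pmatrix}$), and use Lemma~\ref{1} on both $A_n+A_n^*$ and $Z_n+Z_n^*$ to transfer the equality to the limit.
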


\begin{proof}

Let us first assume that dim($\mathcal{H})=n<\infty$. Then, by Theorem 1.1, \cite{GLW}, there exists a unitary dilation $U_0\in M_{n+d_{A}}$ of $A$ such that $\lambda_k(U_0+U_0^*)=\lambda_k(A+A^*)$. Take $U=U_0\oplus(-I)\in M_{2n}$. Then $U$ is a unitary dilation of $A$ with $\lambda_k(U+U^*)=\lambda_k(U_0+U_0^*)=\lambda_k(A+A^*)$. In view of (\ref{3}), we may take $U$ in the form
$$U=\begin{pmatrix}
A & *\\
\sqrt{I-A^*A} & *\\
\end{pmatrix}.$$
Since $A^*A+B^*B\leq I_{\mathcal{H}}$, we have $B^*B\leq C^*C$, where $C=\sqrt{I-A^*A}$. So, $B=J\sqrt{I-A^*A}$ for some contraction $J\in M_n$. Let $$V=\begin{pmatrix}
I_n & 0_n\\
0_n & J^*\\
0_n & -\sqrt{I_n-JJ^*}
\end{pmatrix}.$$
Then $V^*V=I_{2n}$. So, $V$ is an isometry. Let $\widetilde{U}=U\oplus(-I_n)$. Take $Z=V^*\widetilde{U}V\in M_{2n}$. Then $Z$ is a contractive dilation of $A$ with the desired form. By Cauchy's interlacing theorem (Corollary III.1.5, \cite{RB}), we  have $\lambda_k(Z+Z^*)\geq \lambda_k(A+A^*)$ and $\lambda_k(\widetilde{U}+\widetilde{U}^*)\geq \lambda_k(Z+Z^*)$ as $Z$ is a dilation of $A$ and $\widetilde{U}$ is a dilation of $Z$. So,
\begin{align*}
\lambda_k(Z+Z^*) 
&\leq\lambda_k(\widetilde{U}+\widetilde{U}^*)\\
&=\lambda_k(U+U^*)\\
&=\lambda_k(A+A^*)\\
&\leq \lambda_k(Z+Z^*).
\end{align*} 
It implies that $\lambda_k(Z+Z^*)=\lambda_k(A+A^*)$. This completes the proof of the proposition whenever $\mathcal{H}$ is of dimension $n<\infty$.

Now, let $\mathcal{H}$ be an infinite-dimensional separable Hilbert space with an orthonormal basis $\{e_1,e_2,\cdots\}$. Take $A=(a_{ij})_{1\leq i,j\leq \infty}$ and $B=(b_{ij})_{1\leq i,j\leq \infty}$ with respect to the orthonormal basis $\{e_1,e_2,\cdots\}$. Let $A_n=(a_{ij})_{1\leq i,j\leq n}$ and $B_n=(b_{ij})_{1\leq i,j\leq n}$ be the finite sections of $A$ and $B$ respectively. As $A^*A+B^*B\leq I_{\mathcal{H}}$, we have $A_n^*A_n+B_n^*B_n\leq I_{\mathcal{H}}$. So, by the above finite dimensional result, there exists a contractive dilation $$Z_n=\begin{pmatrix}
A_n & C_n\\
B_n & D_n\\
\end{pmatrix}\in M_{2n} $$
of $A_n$ with $\lambda_k(Z_n+Z_n^*)=\lambda_k(A_n+A_n^*)$. Consider
$$\widetilde{Z}_n=
\begin{pmatrix}
\begin{array}{cc|cc}
A_n & \textbf{0} & C_n &\textbf{ 0} \\
\textbf{0} & \textbf{0 }& \textbf{0} & \textbf{0} \\
\hline
B_n & \textbf{0} & D_n & \textbf{0}\\
\textbf{0} &\textbf{ 0} & \textbf{0} & \textbf{0}
\end{array}
\end{pmatrix}\in \mathcal{B}(\mathcal{H\oplus H)}.
$$
Note that $\widetilde{Z}_n$ converges in weak operator topology to
\begin{align*}
Z=\begin{pmatrix}
A & C\\
B & D\\
\end{pmatrix}.
\end{align*}
Now, by applying Lemma \ref{1}, we obtain $\lim\limits_{n\rightarrow \infty}\lambda_k(A_n+A_n^*)=\lambda_k(A+A^*)$ and $\lim\limits_{n\rightarrow \infty}\lambda_k(Z_n+Z_n^*)=\lambda_k(Z+Z^*)$. Finally, since $\lambda_k(Z_n+Z_n^*)=\lambda_k(A_n+A_n^*)$ for all $n\geq k$, we have $\lambda_k(Z+Z^*)=\lambda_k(A+A^*)$.
This completes the proof. 
\end{proof}

The following theorem plays the key role while proving Theorem \ref{main theorem 1}.

\begin{theorem}\label{key theorem}
Let $A\in\mathcal{B\mathcal{(H)}}$ be a contraction and $k\in\mathbb{N}$. Then there exists a unitary dilation $U\in \mathcal{B\mathcal{(H\oplus H)}}$ of $A$ such that $\lambda_k(U+U^*)=\lambda_k(A+A^*).$ 
\end{theorem}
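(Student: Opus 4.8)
The plan is to produce the desired unitary dilation $U$ on $\mathcal{H}\oplus\mathcal{H}$ by first building a \emph{contractive} dilation with the right eigenvalue behaviour and then enlarging it to a unitary one without disturbing that behaviour. The natural starting point is Proposition \ref{contractive dilation} applied with $B=\sqrt{I-A^*A}$. Indeed, with this choice we have $A^*A+B^*B=I_{\mathcal{H}}\leq I_{\mathcal{H}}$, so the proposition furnishes operators $C,D\in\mathcal{B}(\mathcal{H})$ giving a contractive dilation
\begin{align*}
Z=\begin{pmatrix}
A & C\\
\sqrt{I-A^*A} & D
\end{pmatrix}\in\mathcal{B}(\mathcal{H}\oplus\mathcal{H})
\end{align*}
of $A$ satisfying $\lambda_k(Z+Z^*)=\lambda_k(A+A^*)$. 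The point of choosing $B$ to be the full defect operator is that the first column of $Z$ is then already isometric in the sense that $A^*A+B^*B=I$, which is exactly the column-normalisation forced on the first column of any unitary matrix of the form (\ref{3}); this is what will let me complete $Z$ to a unitary rather than merely a contraction.

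\textbf{From contraction to unitary.}
The second step is to dilate the contraction $Z$ on $\mathcal{H}\oplus\mathcal{H}$ to a unitary on the \emph{same} space $\mathcal{H}\oplus\mathcal{H}$ while preserving $\lambda_k$ of the real part. The obstruction to doing this abstractly is that Halmos' dilation would naively double the space again, landing in $(\mathcal{H}\oplus\mathcal{H})\oplus(\mathcal{H}\oplus\mathcal{H})$; I must stay within $\mathcal{H}\oplus\mathcal{H}$. The mechanism is the defect structure of $Z$: since the first column of $Z$ is already isometric, $Z$ fails to be an isometry only through its second column, and the corresponding defect spaces $\mathcal{D}_Z$, $\mathcal{D}_{Z^*}$ both embed into the second copy of $\mathcal{H}$. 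Concretely I expect to argue that $I-Z^*Z$ and $I-ZZ^*$ are each supported on the second summand, so a Julia-type completion of $Z$ can be carried out inside $\mathcal{H}\oplus\mathcal{H}$ itself. This is the analogue, in infinite dimensions, of the finite-dimensional bookkeeping in Proposition \ref{contractive dilation} where one passes from $M_{2n}$ back to $M_{2n}$ via the isometry $V$; here the same idea should produce a unitary $U\in\mathcal{B}(\mathcal{H}\oplus\mathcal{H})$ dilating $Z$.

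\textbf{Controlling the eigenvalue.}
The third step pins down $\lambda_k(U+U^*)$. Because $A$ is a compression of $Z$ and $Z$ is a compression of $U$, Cauchy interlacing (Corollary III.1.5, \cite{RB}, already invoked in Proposition \ref{contractive dilation}) gives the chain
\begin{align*}
\lambda_k(A+A^*)\leq\lambda_k(Z+Z^*)\leq\lambda_k(U+U^*).
\end{align*}
For the reverse inequality $\lambda_k(U+U^*)\leq\lambda_k(A+A^*)$ I cannot use interlacing directly, so the plan is to exhibit a unitary dilation $U$ whose real part is \emph{unitarily equivalent} to a direct sum in which the relevant spectral mass above $\lambda_k(A+A^*)$ is exactly that of $A+A^*$. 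The cleanest route is to mimic the finite-dimensional construction verbatim: take the Halmos-type unitary dilation $U_0$ of $A$, form $\widetilde U=U_0\oplus(-I)$ (the summand $-I$ contributes eigenvalue $-2$ to the real part, well below $\lambda_k(A+A^*)$ for any contraction, hence harmless to the top $k$ eigenvalues), and then compress by the isometry $V$ built from $J$ with $B=J\sqrt{I-A^*A}$. The equality $\lambda_k(U+U^*)=\lambda_k(A+A^*)$ then follows from the squeeze $\lambda_k(A+A^*)\leq\lambda_k(U+U^*)\leq\lambda_k(\widetilde U+\widetilde U^*)=\lambda_k(U_0+U_0^*)=\lambda_k(A+A^*)$.

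\textbf{Main obstacle.}
The hard part will be the infinite-dimensional completion in the second step: in $M_{2n}$ one freely forms $U_0\oplus(-I)$, chooses the contraction $J$, and passes through the isometry $V$, all finite manipulations; in $\mathcal{B}(\mathcal{H}\oplus\mathcal{H})$ I must verify that the analogous $V$ is a genuine isometry of $\mathcal{H}\oplus\mathcal{H}$ into itself (not into a larger space) and that $V^*\widetilde U V$ is both unitary and a dilation of $A$. This requires that the factorisation $B=J\sqrt{I-A^*A}$ with $J$ a contraction still holds (it does, by the operator inequality $B^*B\leq I-A^*A$ and the Douglas factorisation lemma), and that $\sqrt{I-JJ^*}$ and the ambient multiplicities line up so everything fits in two copies of $\mathcal{H}$. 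Once the algebra of the completion is checked to go through in the operator setting, the interlacing and squeezing arguments transfer without change, and the theorem follows.
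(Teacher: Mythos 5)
Your opening step coincides with the paper's: apply Proposition \ref{contractive dilation} with $B=\sqrt{I-A^*A}$, so that the resulting contractive dilation $Z$ satisfies $\lambda_k(Z+Z^*)=\lambda_k(A+A^*)$ and is isometric on $\mathcal{H}\oplus\mathcal{O}$. From there, however, the proposal has a genuine gap: it never actually produces a \emph{unitary} with the right eigenvalue. First, of the two claims in your completion step, only $\mathcal{D}_Z\subseteq\mathcal{O}\oplus\mathcal{H}$ follows (the isometric first column forces $I-Z^*Z$ to vanish on $\mathcal{H}\oplus\mathcal{O}$); Proposition \ref{contractive dilation} gives no control on the rows of $Z$, so the assertion that $\mathcal{D}_{Z^*}$ also sits in the second summand is unsupported. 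Second, and more seriously, the existence of \emph{some} unitary completion of the column $\bigl(\begin{smallmatrix}A\\ \sqrt{I-A^*A}\end{smallmatrix}\bigr)$ inside $\mathcal{B}(\mathcal{H}\oplus\mathcal{H})$ is not the issue --- the Halmos dilation is already one; the whole difficulty is to find a completion that does not raise $\lambda_k$ of the real part, and the only mechanism you offer for this is the step-three squeeze, which is broken at its right-hand end. The equality $\lambda_k(U_0+U_0^*)=\lambda_k(A+A^*)$ is false for the Halmos dilation in general: for the $2\times 2$ nilpotent Jordan block $A$ and $k=1$ one has $\lambda_1(A+A^*)=1$, while its Halmos dilation $U_0$ satisfies $\lambda_1(U_0+U_0^*)=\sqrt{2}$. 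If instead $U_0$ is meant to be a dilation already satisfying that equality (as in the proof of Proposition \ref{contractive dilation}, where Theorem 1.1 of \cite{GLW} supplies it), then in infinite dimensions its existence is precisely Theorem \ref{key theorem}, and the argument is circular --- the appeal to \cite{GLW} in Proposition \ref{contractive dilation} is legitimate only because it is applied to the finite sections $A_n$. Note also that compressing $\widetilde U=U_0\oplus(-I)$ by the isometry $V$ yields $V^*\widetilde U V$, which is only a contraction; this step literally reproduces the proof of Proposition \ref{contractive dilation} and lands you back at a contractive, not unitary, dilation.

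The paper closes this gap with two ideas absent from your plan. First, it iterates Proposition \ref{contractive dilation}: $Z_1$ dilates $A$ and is isometric on $\mathcal{H}\oplus\mathcal{O}$; $Z_2$ dilates $Z_1$ and is isometric on $\mathcal{H}\oplus\mathcal{H}\oplus\mathcal{O}\oplus\mathcal{O}$; and so on, each stage preserving $\lambda_k$ of the real part. The limit $U=Z_\infty$ is then isometric on a dense subspace of $\mathcal{H}\oplus\mathcal{H}\oplus\cdots$, hence an isometry, and the infinite sum is identified with $\mathcal{H}\oplus\mathcal{H}$ using separability. Second --- and this is the step for which your Julia-completion idea has no substitute --- the isometry is shown to be unitary by contradiction: the case $\lambda_k(A+A^*)=2$ is split off at the outset (there the Halmos dilation trivially works), so one may assume $\mu=\lambda_k(A+A^*)<2$; if $U$ were a proper isometry, Corollary \ref{properisometry} (Wold decomposition together with Lemma \ref{k-rank numerical range of shift}) would give $\overline{\Lambda_k(U)}=\overline{\mathbb{D}}$, forcing $\lambda_k(U+U^*)=2$, contradicting $\mu<2$. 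Without the iteration (which makes the whole operator isometric, not just one column) and without this properness contradiction (which upgrades the isometry to a unitary), the proposal does not reach the conclusion.
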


\begin{proof}

If $\lambda_k(A+A^*)=2$ then the following unitary dilation $$U=\begin{pmatrix}
A & -\sqrt{I-AA^*}\\
\sqrt{I-A^*A} & A^*\\
\end{pmatrix}\in \mathcal{B\mathcal{(H\oplus H)}}$$ of $A$  will do the job. Therefore, assume that $\lambda_k(A+A^*)=\mu<2$.

Take $B=\sqrt{I-A^*A}\in\mathcal{B\mathcal{(H)}}$. Then, by Proposition \ref{contractive dilation}, there exists a contractive dilation $$Z_1=\begin{pmatrix}
    A & C\\
    \sqrt{I-A^*A} & D\\
  \end{pmatrix}\in \mathcal{B}(\mathcal{H\oplus H)} $$ 
  of $A$ such that $\lambda_k(Z_1+Z_1^*)=\lambda_k(A+A^*)$ and $\Vert Z_1v\Vert =\Vert v \Vert$ for all $v\in\mathcal{H\oplus O}$, where $\mathcal{O}$ is the zero subspace of $\mathcal{H}$.

Repeating the argument on $Z_1$, we get a contractive dilation $$Z_2=\begin{pmatrix}
    Z_1 & \widetilde{C}\\
    \sqrt{I-Z_1^*Z_1} & \widetilde{D}\\
  \end{pmatrix}\in \mathcal{B}(\mathcal{H\oplus H\oplus H\oplus H)} $$ 
  of $Z_1$ such that $\lambda_k(Z_2+Z_2^*)=\lambda_k(Z_1+Z_1^*)=\lambda_k(A+A^*)$ and $\Vert Z_2v\Vert =\Vert v \Vert$ for all $v\in\mathcal{H\oplus H\oplus O\oplus O}$. Continuing this process, we obtain a contractive dilation $Z_\infty$, denoted by $U$, acting on $\mathcal{H\oplus H\oplus H\oplus\cdots}$ such that $\lambda_k(U+U^*)=\lambda_k(A+A^*)$ and $\Vert Uv\Vert =\Vert v \Vert$ for all unit vector $v\in \mathcal{H\oplus H\oplus H\oplus\cdots}$. Identifying $\mathcal{O\oplus H}$ with $\mathcal{O\oplus H\oplus H\cdots}$, we  may regard $U$ as an isometry acting on $\mathcal{H}\oplus\mathcal{H}$ while $A$ acts on $\mathcal{H}\oplus\mathcal{O}$. Hence we get an isometric dilation $U\in\mathcal{B\mathcal{(H\oplus H)}}$ of $A$ such that $\lambda_k(U+U^*)=\lambda_k(A+A^*)=\mu<2.$

We will now show that $U$ is unitary. If possible, assume that $U$ is a proper isometry. Then, by Corollary \ref{properisometry}, we get $\overline{\Lambda_k(U)}=\overline{\mathbb{D}}$. This forces $\overline{\Lambda_k(U+U^*)}=[-2,2].$ This  contradicts our assumption that $\lambda_k(U+U^*)=\mu<2.$ This completes the proof.  
\end{proof}

\begin{proof}[\textbf{Proof of Theorem \ref{main theorem 1}}]
	Suppose $1\leq k\leq\infty$. Clearly,
	\begin{align*}
	\overline{\Lambda_k(A)}&\subseteq\bigcap\left\{\overline{\Lambda_k(U)}: U\in\mathcal{B}\mathcal{(H\oplus H)} \text{ is a unitary dilation of } A\right\}.
	\end{align*}

	To prove the reverse inclusion, let us first assume that $k\in\mathbb{N}$. Suppose $\xi\notin\overline{\Lambda_k(A)}$. Then, by Theorem \ref{geomertic description for operator}, there exists $\theta\in[0,2\pi)$ such that $e^{i\theta}\xi+e^{-i\theta}\overline{\xi}>\lambda_k(e^{i\theta}A+e^{-i\theta}A^*)$. Now, by Theorem \ref{key theorem}, there exists a unitary dilation $U\in \mathcal{B\mathcal{(H\oplus H)}}$ of $A$ such that $\lambda_k(e^{i\theta}U+e^{-i\theta}U^*)=\lambda_k(e^{i\theta}A+e^{-i\theta}A^*)$. So, $e^{i\theta}\xi+e^{-i\theta}\overline{\xi}>\lambda_k(e^{i\theta}U+e^{-i\theta}U^*)$. Therefore, again by Theorem \ref{geomertic description for operator}, we have $\xi\notin\overline{\Lambda_k(U)}$.

	Let $\xi\notin\overline{\Lambda_{\infty}(A)}$. Then, by Theorem \ref{geomertic description for operator2}, there exists $k_o\in\mathbb{N}$ such that $\xi\notin\Omega_{k_o}(A)$. So, by Theorem \ref{geomertic description for operator}, there exists $\theta\in [0,2\pi)$ such that $e^{i\theta}\xi+e^{-i\theta}\bar{\xi}>\lambda_{k_o}(e^{i\theta}A+e^{-i\theta}A^*)$. Now, by Theorem \ref{key theorem}, there exists a unitary dilation $U\in\mathcal{B}\mathcal{(H\oplus H)}$ of $A$ such that $\lambda_{k_o}(e^{i\theta}A+e^{-i\theta}A^*)=\lambda_{k_o}(e^{i\theta}U+e^{-i\theta}U^*)$. So, $e^{i\theta}\xi+e^{-i\theta}\bar{\xi}>\lambda_{k_o}(e^{i\theta}U+e^{-i\theta}U^*)$. Therefore, by Theorem \ref{geomertic description for operator}, $\xi\notin\Omega_{k_0}(U)$. Hence, by Theorem \ref{geomertic description for operator2}, $\xi\notin\bigcap\limits_{k\geqslant 1}\Omega_k(U)=\Omega_{\infty}(U)=\overline{\Lambda_{\infty}(U)}$ as $\emptyset\neq\Lambda_{\infty}(A)\subseteq\Lambda_{\infty}(U)$. This completes the proof.
\end{proof}

The following example shows that Theorem \ref{main theorem 1} is not true whenever the $\infty$-rank numerical range is empty.

\begin{example}\label{counter example}
	Consider $A=\bigoplus\limits_{n\geq 2}\begin{pmatrix}
	-\frac{1}{n} & 0\\
	0 & \frac{e^\frac{i\pi}{n}}{n}\\
	\end{pmatrix}$. Then
	\begin{align*}
	&\sigma_e(A)=\{-\frac{1}{n}:n\geq 2\}\cup\{\frac{e^\frac{i\pi}{n}}{n}:n\geq 2\},\\
	&\sigma(A)=\sigma_e(A)\cup\{0\}.
	\end{align*}
	Using Theorem \ref{Dey-Mukherjee theorem}, it was shown in Example 5.2, \cite{DM} that $\Lambda_\infty(A)=\emptyset$. Let $U$ be a unitary dilation of $A$. If possible, assume that $0\notin\Lambda_\infty(U)=\bigcap\limits_{k\geq 1}\Lambda_k(U)$. Then there exists $k\in\mathbb{N}$ such that $0\notin\Lambda_k(U)$. By Theorem \ref{Dey-Mukherjee theorem}, there exists a half closed-half plane $H_\circ$ at $0$ such that $\text{dim ran}E_{U}(H_\circ)<k$. We claim that $H_\circ$ cannot contain infinitely many eigenvalues of $A$. If possible, let $H_\circ$ contain infinitely many eigenvalues of $A$, say, $\{\lambda_r:r\geq 1\}$. Take $A'=\bigoplus\limits_{r\geq 1}(\lambda_r)$. Then $$\Lambda_k(A')\subseteq\Lambda_k(A)\subseteq\Lambda_k(U)\subseteq H_\circ^c.$$ It is a contradiction as $\emptyset\neq\Lambda_k(A')\subseteq\text{conv}\{\lambda_r:r\geq 1\}\subseteq H_\circ$. So, the only possible choice of $H_\circ=\{z\in\mathbb{C}:\Im{(z)}<0\}\cup[0,\infty)$. Let $H_{-\frac{1}{2}}=\{z\in\mathbb{C}:\Im{(z)}<0\}\cup[-\frac{1}{2},\infty)$ be a half closed-half plane at $-\frac{1}{2}$. As $\text{dim ran}E_{U}(H_\circ)<k$, we have $\text{dim ran}E_U{(H_{-\frac{1}{2}}})<k$. Consider $A''=\bigoplus\limits_{n\geq 2}(-\frac{1}{n})$. Then $$\Lambda_k(A'')\subseteq\Lambda_k(A)\subseteq\Lambda_k(U)\subseteq H_{-\frac{1}{2}}^c.$$ It is again a contradiction as $\emptyset\neq\Lambda_k(A'')\subseteq\text{conv}\{-\frac{1}{n}:n\geq 2\}\subseteq H_{-\frac{1}{2}}$. Hence $0\in\Lambda_\infty(U)$ for every unitary dilation $U$ of $A$. This provides an example that Theorem \ref{main theorem 1} is not true whenever the $\infty$-rank numerical range is empty.
\end{example}

\begin{remark}
	The contraction $A$, in Example \ref{counter example}, shows that if $W(A)$ lies in a half closed-half plane, it does not necessarily imply that there exists a unitary dilation $U$ of $A$ with $W(U)$ lying in the same half closed-half plane. 
\end{remark}

\section{Proof of Theorem \ref{main theorem 2}}

We begin with a few lemmas.

\begin{lemma}[Proposition 2.2, \cite{BT}]\label{Berkovici and Timotin's unitary N-dilation}
Let $A\in\mathcal{B}(\mathcal{H})$ be a contraction with $d_A=d_{A^*}=N<\infty$. Assume that $\lambda_1,\cdots,\lambda_r$ are distinct points in $\mathbb{T}\setminus\sigma(T)$ and $n_1,\cdots,n_r$ are positive integers satisfying $\sum\limits_{j=1}^r n_j=N$. Then there exists a unitary $N$-dilation $U$ of $T$ such that $\lambda_j$ is an eigenvalue of $U$ with multiplicity greater than or equal to $n_j$ for every $j\in\{1,2,\cdots,r\}$.
\end{lemma}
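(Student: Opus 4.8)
The plan is to produce the desired dilation inside an explicit one-parameter family of unitary $N$-dilations and to convert the eigenvalue prescription into a finite-dimensional matching problem. Identify $\mathbb{C}^N$ with the defect space $\mathcal{D}_{A^*}$, which is legitimate since $d_{A^*}=N$. Starting from the Julia operator
$$J=\begin{pmatrix} A & D_{A^*}\\ D_A & -A^*\end{pmatrix}:\mathcal{H}\oplus\mathcal{D}_{A^*}\longrightarrow\mathcal{H}\oplus\mathcal{D}_A,$$
which is unitary and whose $(1,1)$ entry is $A$, I would attach a coupling unitary $\Gamma:\mathcal{D}_A\to\mathcal{D}_{A^*}$ (these exist because $d_A=d_{A^*}=N$) and set $U_\Gamma=(I_{\mathcal H}\oplus\Gamma)J$. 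Each $U_\Gamma$ is a unitary $N$-dilation of $A$ to $\mathcal{H}\oplus\mathcal{D}_{A^*}\cong\mathcal{H}\oplus\mathbb{C}^N$, so it suffices to choose $\Gamma$ well.

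Next I would compute the eigenvectors of $U_\Gamma$ at a point $\lambda\in\mathbb{T}\setminus\sigma(A)$. Writing such an eigenvector as $(x,d)$ with $d\in\mathcal{D}_{A^*}$, the first row forces $x=(\lambda-A)^{-1}D_{A^*}d$ (here $\lambda-A$ is invertible since $\lambda\notin\sigma(A)$), and substituting into the second row yields $\Theta(\lambda)d=\Gamma^*d$, where
$$\Theta(\lambda):=\tfrac1\lambda\bigl[D_A(\lambda-A)^{-1}D_{A^*}-A^*\bigr]\big|_{\mathcal{D}_{A^*}}:\mathcal{D}_{A^*}\to\mathcal{D}_A.$$
As $d\mapsto(x,d)$ is injective, the multiplicity of $\lambda$ as an eigenvalue of $U_\Gamma$ equals $\dim\ker(\Gamma\Theta(\lambda)-I_{\mathcal{D}_{A^*}})$. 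The map $\Theta$ is a boundary value of the Sz.-Nagy--Foias characteristic function of $A$, and at every $\lambda\in\mathbb{T}\setminus\sigma(A)$ it is unitary from $\mathcal{D}_{A^*}$ onto $\mathcal{D}_A$; I would either quote this or verify it directly from the intertwining identity $A^*D_{A^*}=D_AA^*$ together with the unitarity of $U_\Gamma$. Hence each $\Gamma\Theta(\lambda_j)$ is a unitary on the $N$-dimensional space $\mathcal{D}_{A^*}$, and the lemma reduces to the finite-dimensional statement: for the unitaries $\Theta_j:=\Theta(\lambda_j)$, choose a unitary $\Gamma$ so that $1$ is an eigenvalue of $\Gamma\Theta_j$ of multiplicity at least $n_j$ for every $j$.

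Because $\sum_j n_j=N=\dim\mathcal{D}_{A^*}$, the natural way to satisfy all $r$ conditions simultaneously is to split $\mathcal{D}_{A^*}=\bigoplus_{j=1}^r E_j$ orthogonally with $\dim E_j=n_j$ and to prescribe $\Gamma^*$ to agree with $\Theta_j$ on $E_j$. This produces a genuine unitary $\Gamma$ exactly when the images $\Theta_j(E_j)$ are mutually orthogonal in $\mathcal{D}_A$ (equivalently, when the projections $\Theta_jP_{E_j}\Theta_j^*$ sum to $I_{\mathcal{D}_A}$), and in that case $\ker(\Gamma\Theta_j-I)\supseteq E_j$. So everything comes down to the matching lemma: for unitaries $\Theta_1,\dots,\Theta_r$ between two $N$-dimensional spaces and integers $n_j\ge1$ with $\sum n_j=N$, there is an orthogonal decomposition $\bigoplus_j E_j$ with $\dim E_j=n_j$ whose images $\{\Theta_j(E_j)\}$ are again mutually orthogonal. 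For $r=2$ this is elementary: taking $E_1$ to be the span of any $n_1$ eigenvectors of the unitary $\Theta_1^*\Theta_2$ makes $E_1$ reducing, whence $\Theta_1(E_1)\perp\Theta_2(E_1^\perp)$ at once.

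The general case is the crux and I expect it to be the main obstacle, because a naive induction fails: peeling off one summand $E_r$ turns the remaining maps $\Theta_j|_{E_r^\perp}$ into contractions rather than unitaries, so the problem does not descend to lower dimension. I would instead prove the matching lemma by a soft topological and limiting argument. The set of ordered orthogonal decompositions of the prescribed type is the compact connected flag manifold $U(N)/\bigl(U(n_1)\times\cdots\times U(n_r)\bigr)$, and the matching condition is the vanishing of a section of a vector bundle over it. The plan is to establish existence for generic $(\Theta_1,\dots,\Theta_r)$ by a degree computation (nonvanishing of the relevant Euler class) and then to pass to arbitrary tuples by compactness of the flag manifold, since the tuples admitting a solution form a closed set and the generic tuples are dense. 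Once $\Gamma$ has been produced, unwinding the reduction yields the unitary $N$-dilation $U=U_\Gamma$ having each $\lambda_j$ as an eigenvalue of multiplicity at least $n_j$, which completes the proof.
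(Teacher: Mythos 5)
Your reduction is correct and is essentially the right framework: parametrizing unitary $N$-dilations as $U_\Gamma=(I_{\mathcal H}\oplus\Gamma)J$, solving the first row to get $x=(\lambda-A)^{-1}D_{A^*}d$, and identifying the multiplicity of $\lambda\in\mathbb{T}\setminus\sigma(A)$ in $U_\Gamma$ with $\dim\ker(\Gamma\Theta(\lambda)-I)$, where $\Theta(\lambda)$ is a unitary boundary value of the characteristic function, are all sound (and your $r=2$ matching via eigenvectors of $\Theta_1^*\Theta_2$ is fine). But the proof has a genuine gap exactly where you locate the crux: for $r\geq 3$ you offer a programme, not an argument. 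The assertion ``nonvanishing of the relevant Euler class'' is the entire mathematical content of the matching lemma, and it is not computed or even set up: the orthogonality constraints are real (not complex Schubert) conditions on the flag manifold $U(N)/\bigl(U(n_1)\times\cdots\times U(n_r)\bigr)$, so one must identify the obstruction bundle, settle orientability (or retreat to a mod $2$ count, which could vanish), prove transversality for generic tuples, and then actually evaluate the characteristic number; nothing in the proposal excludes the count being zero by cancellation. Prescribing eigenvalue-one multiplicities for the products $\Gamma\Theta_j$ of unitaries is a problem of multiplicative Horn/quantum Schubert type, which is known to be delicate, so this step cannot be waved through. Your compactness-and-density limiting argument is fine in itself, but it has nothing to feed on until the generic case is established.

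For comparison: the paper does not prove this lemma at all --- it quotes it verbatim as Proposition 2.2 of \cite{BT} --- and the Bercovici--Timotin argument avoids the simultaneous matching problem entirely by planting one eigenvalue at a time. In essence one inducts on $r$: choosing the coupling to be the full characteristic value at $\lambda_1$ gives a unitary dilation in which $\lambda_1$ has multiplicity $N$, and compressing it to $\mathcal{H}\oplus E$ for an $n_1$-dimensional subspace $E$ of the defect space yields a contractive $n_1$-dilation $T_1$ of $A$ having $\lambda_1$ as a unimodular eigenvalue of multiplicity $n_1$ and defect numbers at most $N-n_1$. Since the eigenspace of a unimodular eigenvalue of a contraction is reducing, such eigenvalues persist under every further contractive or unitary dilation (a Cauchy--Schwarz equality argument), so iterating until the defect numbers vanish produces the desired unitary $N$-dilation. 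The obstruction you correctly identified --- that peeling off one block turns the remaining maps into contractions --- never arises there, because after each step one works with the characteristic function of the \emph{new} contraction, which is again unitary off its spectrum, restoring full freedom at every stage. So to complete your proof you must either carry out the degree computation in earnest or replace the simultaneous matching by this sequential defect-reduction induction.
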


Let $A\in\mathcal{B(\mathcal{H})}$ be self-adjoint and $k\in\mathbb{N}$. Define
\begin{align}
	\mu_k(A)=\inf\limits_{\substack{\mathcal{N}\subseteq\mathcal{H} \\ codim\mathcal{N}<k}}\sup\limits_{\substack{x\in \mathcal{N} \\ \Vert x\Vert=1}}\langle Ax,x\rangle.
\end{align}
We claim that $\lambda_k(A)\leq\mu_k(A)$. Indeed, if $\mathcal{M,N}$ are two closed subspaces of $\mathcal{H}$ with $\text{dim}\mathcal{M}=k$ and $\text{codim}\mathcal{N}<k$ then there exists a unit vector $h\in\mathcal{M}\cap\mathcal{N}$. Now,
\begin{align}\label{2}
	&\min\limits_{\substack{x\in \mathcal{M} \\ \Vert x\Vert=1}}\langle Ax,x\rangle\leq\langle Ah,h\rangle\leq\sup\limits_{\substack{x\in \mathcal{N} \\ \Vert x\Vert=1}}\langle Ax,x\rangle\nonumber\\
	&\Rightarrow\lambda_k(A)=\sup\limits_{\substack{\mathcal{M}\subseteq \mathcal{H}\\ dim \mathcal{M}=k}}\min\limits_{\substack{x\in \mathcal{M} \\ \Vert x\Vert=1}}\left\langle Ax,x\right\rangle\leq\sup\limits_{\substack{x\in \mathcal{N} \\ \Vert x\Vert=1}}\langle Ax,x\rangle\nonumber\\
	&\Rightarrow\lambda_k(A)\leq\inf\limits_{\substack{\mathcal{N}\subseteq\mathcal{H} \\ codim\mathcal{N}<k}}\sup\limits_{\substack{x\in \mathcal{N} \\ \Vert x\Vert=1}}\langle Ax,x\rangle=\mu_k(A).
\end{align}

\begin{lemma}\label{key lemma for proving main theorem 2}
Let $A\in\mathcal{B}(\mathcal{H})$ be a contraction with $d_A=d_{A^*}=N<\infty$ and $k\in\mathbb{N}$. Then there exists a unitary $N$-dilation $U$ of $A$ such that $\lambda_k(U+U^*)=\lambda_k(A+A^*)$.
\end{lemma}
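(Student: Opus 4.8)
The plan is to establish the nontrivial inequality $\lambda_k(U+U^*)\le\lambda_k(A+A^*)$ for one well-chosen $N$-dilation, the reverse being automatic. Write $\mu=\lambda_k(A+A^*)=2\nu$ and $\mathcal{K}=\mathcal{H}\oplus\mathbb{C}^N$. For any unitary $N$-dilation $U$ and any $x\in\mathcal{H}$ one has $\langle(U+U^*)x,x\rangle=\langle Ax,x\rangle+\langle A^*x,x\rangle=\langle(A+A^*)x,x\rangle$, since $P_{\mathcal{H}}Ux=Ax$; as two self-adjoint operators with equal quadratic forms coincide, $A+A^*=P_{\mathcal{H}}(U+U^*)|_{\mathcal{H}}$ is a compression, whence $\lambda_k(U+U^*)\ge\lambda_k(A+A^*)$. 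It therefore suffices to produce $U$ with $\lambda_k(U+U^*)\le\mu$. By the inequality $\lambda_k(\,\cdot\,)\le\mu_k(\,\cdot\,)$ established above, it is enough to exhibit a closed subspace $\mathcal{N}\subseteq\mathcal{K}$ with $\mathrm{codim}\,\mathcal{N}<k$ on which $\langle(U+U^*)x,x\rangle\le\mu\|x\|^2$; writing $T=U+U^*$ and taking $\mathcal{N}=\mathrm{ran}\,E_T((-\infty,\mu])$, this reduces everything to the single dimension bound $\dim E_T((\mu,\infty))\le k-1$.

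First I would fix the dilation. Choose $\lambda_0\in\mathbb{T}\setminus\sigma(A)$ with $\Re\lambda_0=\nu$ and apply Lemma \ref{Berkovici and Timotin's unitary N-dilation} (with $r=1$, $n_1=N$) to obtain a unitary $N$-dilation $U$ for which $\lambda_0$ is an eigenvalue of multiplicity at least $N$. Set $\mathcal{W}=\ker(U-\lambda_0)$. Being an eigenspace of the unitary $U$, $\mathcal{W}$ reduces $U$, hence $T$, and $T|_{\mathcal{W}}=(\lambda_0+\overline{\lambda_0})I=\mu I$, so $\mathcal{W}\subseteq\ker(T-\mu)$. Moreover $\mathcal{W}\cap\mathcal{H}=\{0\}$, because an eigenvector of $U$ at $\lambda_0$ lying in $\mathcal{H}$ would force $Ah=\lambda_0h$, contradicting $\lambda_0\notin\sigma(A)$. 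Consequently the quotient map $\pi\colon\mathcal{K}\to\mathcal{K}/\mathcal{H}\cong\mathbb{C}^N$ is injective on $\mathcal{W}$, so $\dim\mathcal{W}=N$ exactly and $\pi|_{\mathcal{W}}$ is a bijection onto $\mathbb{C}^N$.

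The heart of the argument is the bound $\dim E_T((\mu,\infty))\le k-1$, which I would prove by a counting estimate exploiting $\Re\lambda_0=\nu$ precisely. Put $\mathcal{E}_+=\mathrm{ran}\,E_T((\mu,\infty))$ and $\mathcal{E}_\mu=\ker(T-\mu)\supseteq\mathcal{W}$. Given any finite-dimensional $\mathcal{F}\subseteq\mathcal{E}_+$ with $\dim\mathcal{F}=p$, the sum $\mathcal{F}\oplus\mathcal{W}$ is orthogonal (as $\mathcal{E}_+\perp\mathcal{E}_\mu$), has dimension $p+N$, and satisfies $\pi(\mathcal{F}\oplus\mathcal{W})=\mathbb{C}^N$ since $\pi|_{\mathcal{W}}$ is already onto. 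Hence $Z:=\mathcal{H}\cap(\mathcal{F}\oplus\mathcal{W})=\ker(\pi|_{\mathcal{F}\oplus\mathcal{W}})$ has dimension exactly $p$ and lies in $\mathcal{H}$. For a nonzero $x=f+w\in Z$ the reducing property kills the cross terms, giving $\langle(A+A^*)x,x\rangle=\langle Tf,f\rangle+\mu\|w\|^2$; if $f\ne0$ this exceeds $\mu\|x\|^2$ because $\langle Tf,f\rangle>\mu\|f\|^2$ on $\mathcal{E}_+$, while $f=0$ is impossible as it would put $x\in\mathcal{W}\cap\mathcal{H}=\{0\}$. Thus $\langle(A+A^*)x,x\rangle>\mu\|x\|^2$ for every nonzero $x\in Z$, so by compactness of the unit sphere of the finite-dimensional $Z$ we get $\min_{\|x\|=1,\,x\in Z}\langle(A+A^*)x,x\rangle>\mu$. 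This forces $\lambda_p(A+A^*)>\mu=\lambda_k(A+A^*)$, hence $p\le k-1$; letting $\mathcal{F}$ exhaust $\mathcal{E}_+$ yields $\dim\mathcal{E}_+\le k-1$, and the proof is complete.

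The main obstacle I anticipate is not the counting step but securing the spectral point. The argument genuinely needs $\Re\lambda_0=\nu$ exactly: a placement with $\Re\lambda_0>\nu$ puts all of $\mathcal{W}$ into $\mathcal{E}_+$ and over-counts, whereas $\Re\lambda_0<\nu$ lets the pure-$\mathcal{W}$ directions drag $\langle(A+A^*)x,x\rangle$ below $\mu$ and destroys the strict inequality. One therefore needs $\mathbb{T}\setminus\sigma(A)$ to meet the line $\{\Re z=\nu\}$ (for $\mu=2$ note that $E_T((\mu,\infty))$ is automatically trivial, so only the mere existence of a suitable $\lambda_0$ is ever at stake). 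When both points of $\{\Re z=\nu\}\cap\mathbb{T}$ happen to lie in $\sigma(A)$, I would fall back on approximating $A$ within the finite-defect class and passing to a limit, leaning on the closure and intersection operations already built into the statement of Theorem \ref{main theorem 2}.
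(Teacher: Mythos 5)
Your central counting argument --- the strict inequality $\langle Tx,x\rangle>\mu\|x\|^2$ on $Z=\mathcal{H}\cap(\mathcal{F}\oplus\mathcal{W})$ forcing $\dim\operatorname{ran}E_T((\mu,\infty))\le k-1$ --- is correct, and it is genuinely different from what the paper does: the paper never works at the critical line itself, but places the Bercovici--Timotin eigenvalue at real part $\nu+\epsilon$, proves the non-strict bound $\lambda_k(\Re(U_\epsilon))\le\nu+\epsilon$ by a compression argument, and only then recovers exact equality by extracting a norm-limit point from the compact set of unitary $N$-dilations of $A$. Your version, where it applies, eliminates the limiting step entirely. But the proof has a genuine gap exactly where you yourself flagged it: Lemma \ref{Berkovici and Timotin's unitary N-dilation} requires $\lambda_0\in\mathbb{T}\setminus\sigma(A)$, your argument requires $\Re\lambda_0=\nu$ \emph{exactly}, there are only two points of $\mathbb{T}$ with that real part, and both may lie in $\sigma(A)$.

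This failure case is not exotic, and your proposed fallback does not repair it. Take $\nu\in(0,1)$, $\lambda_0=\nu+i\sqrt{1-\nu^2}$, let $W$ be a unitary whose spectrum is a small arc of $\mathbb{T}$ around $-1$, and set $A=\lambda_0\oplus\overline{\lambda_0}\oplus W\oplus 0_1$ on an infinite-dimensional space. Then $d_A=d_{A^*}=1$, and for $k=1,2$ one has $\lambda_k(\Re(A))=\nu$, attained precisely by the conjugate pair of unimodular eigenvalues $\lambda_0,\overline{\lambda_0}\in\sigma(A)$; so your $\lambda_0$ does not exist, although the lemma is true for this $A$ (the paper's proof covers it, since the circle points with real part $\nu+\epsilon$ avoid $\sigma(A)$ for all small $\epsilon>0$). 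Moreover the obstruction is stable: whenever the value $\lambda_k(\Re(A))$ is produced by such a conjugate pair of unimodular eigenvalues, the two critical points automatically sit inside $\sigma(A)$, so they chase the operator under perturbation. The fallback of ``approximating $A$ within the finite-defect class and passing to a limit'' breaks down for two reasons. First, the natural perturbations that push $\lambda_0,\overline{\lambda_0}$ off the circle (e.g. $\lambda_0\mapsto(1-\delta)\lambda_0$) raise the defect numbers from $N$ to $N+2$, so the dilations you obtain act on $\mathcal{H}\oplus\mathbb{C}^{N+2}$, are dilations of operators other than $A$, and their limits need not be unitary $N$-dilations of $A$ --- which is what the lemma demands; the compactness the paper exploits is compactness of the set of unitary $N$-dilations of the \emph{fixed} $A$, not of dilations of varying operators. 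Second, ``leaning on the closure and intersection operations built into Theorem \ref{main theorem 2}'' is circular, because Lemma \ref{key lemma for proving main theorem 2} is precisely the input that the proof of Theorem \ref{main theorem 2} consumes. The clean repair is to graft your counting argument onto the paper's scheme: run it with the eigenvalue placed at real part $\nu+\epsilon$ (where a continuum of placements is available, and only the bound $\lambda_k(\Re(U_\epsilon))\le\nu+\epsilon$ is needed, so strictness is not an issue), and then append the paper's limit-point step to get equality at $\epsilon=0$.
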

\begin{proof}
Let $\lambda_k(\Re{(A)})=\mu$. If $\mu=1$ then any unitary $N$-dilation of $A$ will do the job. So, assume that $-1\leq\mu<1$. Let $\epsilon>0$ be such that the line passing through $\mu+\epsilon$ and parallel to the $Y$-axis cuts the unit circle at two points, say, $\lambda_{\epsilon}$ and $\overline{\lambda_\epsilon}$. Then, by Lemma \ref{Berkovici and Timotin's unitary N-dilation}, there exists a unitary $N$-dilation $U_{\epsilon}$ (acting on $\mathcal{H}\oplus\mathbb{C}^N$) of $A$ such that $\lambda_{\epsilon}$ is an eigenvalue of $U_\epsilon$ with multiplicity $N$. Let $E_\epsilon$ be the spectral measure associated with $\Re{(U_\epsilon)}$. We claim that $\text{dim ran}E_\epsilon(\mu+\epsilon,\infty)<k$. If possible, let $\text{dim ran}E_\epsilon(\mu+\epsilon,\infty)\geq k$. Suppose $f_1^\epsilon,\cdots,f_N^\epsilon$ are orthonormal eigenvectors of $\Re{(U_\epsilon)}$ corresponding to the eigenvalue $\mu+\epsilon$ and $\{f_{N+1}^\epsilon,\cdots,f_{N+r}^\epsilon\}$ is an orthonormal basis of $\text{ran}E_\epsilon(\mu+\epsilon,\infty)$. Consider $\mathcal{K}_\epsilon=\overline{\text{span}}\{f_1^\epsilon,f_2^\epsilon,\cdots,f_{N+r}^\epsilon\}\cap\mathcal{H}$. As codimension of $\mathcal{H}$ in $\mathcal{H}\oplus\mathbb{C}^N$ is $N$, we have $\text{dim}(\mathcal{K}_\epsilon)\geq r\geq k$. Note, $P_{\mathcal{K}_\epsilon}\Re{(U_\epsilon)}|_{\mathcal{K}_\epsilon}=P_{\mathcal{K}_\epsilon}\Re{(A)}|_{\mathcal{K}_\epsilon}$. Let $A'$ be any $k$-by-$k$ compression of $P_{\mathcal{K}_\epsilon}\Re{(A)}|_{\mathcal{K}_\epsilon}$. Then
\begin{align*}
\left[\lambda_k(A'),\lambda_1(A')\right]=W(A')\subseteq W(P_{\mathcal{K}_\epsilon}\Re{(A)}|_{\mathcal{K}_\epsilon})=W(P_{\mathcal{K}_\epsilon}\Re{(U_\epsilon)}|_{\mathcal{K}_\epsilon})\subseteq [\mu+\epsilon,\infty).
\end{align*}
So, $\lambda_k(A')\geq\mu+\epsilon>\mu$. It contradicts $\lambda_k(\Re{(A)})=\mu$ as, by the definition of $\lambda_k(\Re{(A)})$, there cannot exist any $k$-by-$k$ compression of $\Re{(A)}$ whose smallest eigenvalue is stricly greater than $\lambda_k(\Re{(A)})=\mu$. Hence $\text{dim ran}E_\epsilon(\mu+\epsilon,\infty)<k$. Now, using (\ref{2}), we have
\begin{align}\label{4}
\lambda_k(\Re{(U_\epsilon}))
&\leq\mu_k(\Re{(U_\epsilon}))\nonumber\\
&=\inf\limits_{\substack{\mathcal{M}\subseteq\mathcal{H} \\ codim\mathcal{M}<k}}\sup\limits_{\substack{x\in \mathcal{M} \\ \Vert x\Vert=1}}\langle \Re{(U_\epsilon)}x,x\rangle\nonumber\\
&\leq\sup\limits_{\substack{x\in\mathcal{N}\\ \Vert x\Vert=1}}\langle \Re{(U_\epsilon)}x,x\rangle,\text{ where }\mathcal{N}=\text{ran}E_\epsilon(-\infty,\mu+\epsilon]\nonumber\\
&\leq\mu+\epsilon.
\end{align}
Since the set of all unitary $N$-dilations of $A$ on $\mathcal{H}\oplus\mathbb{C}^N$ is compact with respect to the norm topology,  $\{U_\epsilon\}_{\epsilon}$ has a limit point, say, $U$. Clearly, $U$ is a unitary $N$-dilation of $A$. Now, using Corollary III.1.2, \cite{RB}, we obtain
\begin{align*}
\lambda_k(\Re{(U)})
&=\sup\limits_{\substack{\mathcal{M}\subseteq \mathcal{H}\oplus\mathbb{C}^N \\ dim\mathcal{M}=k}}\min\limits_{\substack{x\in \mathcal{M} \\ \Vert x\Vert=1}}\langle\Re{(U)}x,x\rangle\\
&=\lim\limits_{\epsilon\rightarrow 0}\sup\limits_{\substack{\mathcal{M}\subseteq \mathcal{H}\oplus\mathbb{C}^N \\ dim\mathcal{M}=k}}\min\limits_{\substack{x\in \mathcal{M} \\ \Vert x\Vert=1}}\langle\Re{(U_\epsilon)}x,x\rangle,\text{ as } U_\epsilon\rightarrow U \text{ in norm}\\
&=\lim\limits_{\epsilon\rightarrow 0}\lambda_k(\Re{(U_\epsilon)})\nonumber\\
&\leq\lim\limits_{\epsilon\rightarrow 0}\mu+\epsilon, \text{ by }(\ref{4})\\
&=\lambda_k(\Re(A)).
\end{align*}
Again by Cauchy's interlacing theorem (Corollary III.1.5, \cite{RB}), we have $\lambda_k(\Re(U))\geq\lambda_k(\Re{(A)})$ as $U$ is a dilation of $A$. Hence $\lambda_k (\Re{(U)})=\lambda_k(\Re{(A)})$. 
\end{proof}

\begin{proof}[\textit{\textbf{Proof of Theorem \ref{main theorem 2}}}]
We will prove it considering the following two cases.

\underline{\textbf{Case I}}: Suppose $k\in\mathbb{N}$. Clearly,
\begin{align*}
\overline{\Lambda_k(A)}&\subseteq\bigcap\left\{\overline{\Lambda_k(U)}: U\text{ is a unitary } N\text{-dilation of } A \text{ to } \mathcal{H}\oplus\mathbb{C}^N\right\}.
\end{align*}
Let $\xi\notin\overline{\Lambda_k(A)}$. Then, by Theorem \ref{geomertic description for operator}, there exists $\theta\in[0,2\pi)$ such that $e^{i\theta}\xi+e^{-i\theta}\overline{\xi}>\lambda_k(e^{i\theta}A+e^{-i\theta}A^*)$. Now, by Lemma \ref{key lemma for proving main theorem 2}, there exists a unitary $N$-dilation $U$ of $A$ such that $\lambda_k(e^{i\theta}U+e^{-i\theta}U^*)=\lambda_k(e^{i\theta}A+e^{-i\theta}A^*)$. So, $e^{i\theta}\xi+e^{-i\theta}\overline{\xi}>\lambda_k(e^{i\theta}U+e^{-i\theta}U^*)$. Therefore, again by Theorem \ref{geomertic description for operator}, we have $\xi\notin\overline{\Lambda_k(U)}$. This completes the proof in this case.

\underline{\textbf{Case II}}: Suppose $k=\infty$. Let us first assume that $\Lambda_{\infty}(A)=\emptyset$. Suppose $U$ is a unitary $N$-dilation of $A$ to $\mathcal{H}\oplus\mathbb{C}^N$. If possible, let $\lambda\in\Lambda_\infty(U)$. Then there exists an $\infty$-rank projection $P\in\mathcal{B}(\mathcal{H}\oplus\mathbb{C}^N)$ such that $PUP=\lambda P$. Let $\mathcal{K}=\text{ran}P\cap\mathcal{H}$. As codimension of $\mathcal{H}$ in $\mathcal{H}\oplus\mathbb{C}^N$ is $N<\infty,\; \mathcal{K}$ is infinite dimensional. Observe,
\begin{align*}
\lambda P_{\mathcal{K}}=P_{\mathcal{K}}(\lambda P)P_{\mathcal{K}}=P_{\mathcal{K}}PUPP_{\mathcal{K}}=P_{\mathcal{K}}UP_{\mathcal{K}}=P_{\mathcal{K}}AP_{\mathcal{K}}.
\end{align*}
So, $\lambda\in\Lambda_\infty(A)$, which contradicts $\Lambda_\infty(A)=\emptyset$. Hence $\Lambda_\infty(U)=\emptyset$ and we are done.

Now, let $\Lambda_{\infty}(A)\neq\emptyset$. Clearly,
\begin{align*}
\overline{\Lambda_{\infty}(A)}\subseteq\bigcap\left\{\overline{\Lambda_{\infty}(U)}: U\text{ is a unitary } N\text{-dilation of } A \text{ to } \mathcal{H}\oplus\mathbb{C}^N\right\}.
\end{align*}
Let $\xi\notin\overline{\Lambda_{\infty}(A)}$. Then, by Theorem \ref{geomertic description for operator2}, there exists $k_o\in\mathbb{N}$ such that $\xi\notin\Omega_{k_o}(A)$. So, by Theorem \ref{geomertic description for operator}, there exists $\theta\in [0,2\pi)$ such that $e^{i\theta}\xi+e^{-i\theta}\bar{\xi}>\lambda_{k_o}(e^{i\theta}A+e^{-i\theta}A^*)$. Now, by Lemma \ref{key lemma for proving main theorem 2}, there exists a unitary $N$-dilation $U$ of $A$ such that $\lambda_{k_o}(e^{i\theta}A+e^{-i\theta}A^*)=\lambda_{k_o}(e^{i\theta}U+e^{-i\theta}U^*)$. So, $e^{i\theta}\xi+e^{-i\theta}\bar{\xi}>\lambda_{k_o}(e^{i\theta}U+e^{-i\theta}U^*)$. Therefore, by Theorem \ref{geomertic description for operator}, $\xi\notin\Omega_{k_0}(U)$. Hence, by Theorem \ref{geomertic description for operator2}, $\xi\notin\bigcap\limits_{k\geqslant 1}\Omega_k(U)=\Omega_{\infty}(U)=\overline{\Lambda_{\infty}(U)}$ as $\emptyset\neq\Lambda_{\infty}(A)\subseteq\Lambda_{\infty}(U)$. 
\end{proof}

\section{Proof of Theorem \ref{main theorem 3}}

Let $A\in\mathcal{B(\mathcal{H})}$ be a contraction. Suppose $\textbf{c}=\text{diag}(c_1, c_2,\cdots)$ be a finite rank operator. Observe that
\begin{align}\label{C-numerical range}
\overline{W_\textbf{c}(A)}\subseteq\bigcap\left\{\overline{W_{\textbf{c}\oplus \textbf{o}}(U)}: U\text{ is a unitary dilation of A}\right\}.
\end{align}

 We need a few lemmas.

 \begin{lemma}[\cite{L}]\label{theorem for C numerical range}
 	Let $\textbf{c}=\text{diag}(c_1,c_2,\cdots,c_n)$ with $c_1\geq c_2\geq\cdots\geq c_n$. Suppose $A\in M_n$ and $\Re{(A)}$ has eigenvalues $\lambda_1\geq\lambda_2\geq\cdots\geq\lambda_n$. If $\alpha=\sum\limits_{j=1}^nc_j\lambda_{n-j+1}$ and $\beta=\sum\limits_{j=1}^nc_j\lambda_{j}$ then
 	\begin{align*}
 	\Re{(W_\textbf{c}(A))}=W_\textbf{c}(\Re{(A)})=\left[\alpha,\beta\right].
 	\end{align*}
 \end{lemma}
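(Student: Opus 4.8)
The plan is to establish the two equalities separately. First I would prove $\Re(W_{\textbf{c}}(A)) = W_{\textbf{c}}(\Re(A))$ by a direct computation: since each $c_j$ is real, for any orthonormal basis $\{e_j\}_{j=1}^n$ one has
$$\Re\left(\sum_{j=1}^n c_j\langle Ae_j,e_j\rangle\right) = \sum_{j=1}^n c_j\,\Re\langle Ae_j,e_j\rangle = \sum_{j=1}^n c_j\langle \Re(A)e_j,e_j\rangle,$$
where the last step uses $\Re\langle Ae_j,e_j\rangle = \langle \Re(A)e_j,e_j\rangle$. As $\{e_j\}$ ranges over all orthonormal bases, this shows that the real parts of the elements of $W_{\textbf{c}}(A)$ are exactly the elements of $W_{\textbf{c}}(\Re(A))$, which gives the first equality.

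For the second equality I would reduce to computing $W_{\textbf{c}}(H)$ for the Hermitian matrix $H=\Re(A)$. Writing $d_j=\langle He_j,e_j\rangle$, every element of $W_{\textbf{c}}(H)$ has the form $\sum_{j=1}^n c_jd_j$, where $\textbf{d}=(d_1,\ldots,d_n)$ is the diagonal of $H$ in the basis $\{e_j\}$. The key tool is the Schur--Horn theorem: as $\{e_j\}$ varies, $\textbf{d}$ ranges over precisely the set of vectors majorized by the eigenvalue vector $\boldsymbol{\lambda}=(\lambda_1,\ldots,\lambda_n)$, and this majorization polytope is the convex hull of the permutations of $\boldsymbol{\lambda}$. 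Hence $W_{\textbf{c}}(H)$ is the image of a convex polytope under the linear functional $\textbf{d}\mapsto\sum_j c_jd_j$, so it is a closed interval whose endpoints are attained at vertices, i.e. at permutations of $\boldsymbol{\lambda}$.

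I would then pin down the endpoints using the rearrangement inequality. Among all permutations $\pi$, the sum $\sum_j c_j\lambda_{\pi(j)}$ is largest when $\textbf{c}$ and the rearranged eigenvalues are similarly ordered and smallest when oppositely ordered; since $c_1\geq\cdots\geq c_n$ and $\lambda_1\geq\cdots\geq\lambda_n$, the maximum is $\beta=\sum_j c_j\lambda_j$ and the minimum is $\alpha=\sum_j c_j\lambda_{n-j+1}$. Combining this with the interval structure from the previous step yields $W_{\textbf{c}}(H)=[\alpha,\beta]$.

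The step I expect to be the main obstacle is ensuring that the whole interval $[\alpha,\beta]$ is filled out rather than only its two endpoints: this rests on the convexity of the majorization polytope --- equivalently on Westwick's convexity theorem quoted earlier, valid precisely because $\textbf{c}$ is real --- together with the Schur--Horn characterization of attainable diagonals. Once convexity and the two extremal values are in hand, the identification with $[\alpha,\beta]$ is immediate.
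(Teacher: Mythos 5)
Your proof is correct, but there is nothing in the paper to compare it against: the paper states this lemma purely as a citation to Li's survey \cite{L} and gives no proof of it, so your argument fills a gap the authors deliberately left to the literature. Your route is the standard one for this classical fact and it is complete: the termwise identity $\Re\langle Ae_j,e_j\rangle=\langle \Re(A)e_j,e_j\rangle$ (valid since the $c_j$ are real) gives $\Re(W_{\textbf{c}}(A))=W_{\textbf{c}}(\Re(A))$ directly from the paper's parametrization of $W_{\textbf{c}}$ by orthonormal bases; the Schur--Horn theorem identifies the attainable diagonals of the Hermitian matrix $\Re(A)$ with the convex hull of the permutations of $(\lambda_1,\ldots,\lambda_n)$; and the image of that polytope under the linear functional $\textbf{d}\mapsto\sum_j c_jd_j$ is a closed interval whose endpoints, by the rearrangement inequality, are exactly $\alpha$ (opposite ordering) and $\beta$ (similar ordering). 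One small inaccuracy in your closing remark: the convexity you actually need is not ``equivalent'' to Westwick's theorem. Westwick's result concerns $W_{\textbf{c}}(A)$ for arbitrary, non-Hermitian $A$ and is genuinely deep, whereas your argument only needs the convexity of the majorization polytope, which is elementary; your proof never invokes Westwick, so this is a cosmetic slip rather than a gap.
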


Let $A\in\mathcal{B(\mathcal{H})}$ be self-adjoint and $k\in\mathbb{N}$. Define
\begin{align*}
\nu_k(A)=\inf\limits_{\substack{\mathcal{M}\subseteq \mathcal{H}\\ dim\mathcal{M}=k}}\max\limits_{\substack{x\in \mathcal{M} \\ \Vert x\Vert=1}}\langle Ax,x\rangle.
\end{align*}
Then $\nu_k(A)=-\lambda_k(-A)$. Note that if $A\in M_n$ is self-adjoint and $1\leq k\leq n$ then $\nu_k(A)=\lambda_{n-k+1}(A)$.

\begin{lemma}\label{theorem for C numerical range-2}
	Let $A\in\mathcal{B(\mathcal{H})}$ be self-adjoint and $\textbf{c}=\text{diag}(c_1, c_2,\cdots)$ with $c_1\geq c_2\geq\cdots$ be a finite rank operator. Suppose $\alpha=\sum\limits_{j=1}^\infty c_j\nu_j(A)$ and $\beta=\sum\limits_{j=1}^nc_j\lambda_{j}(A)$. Then
	\begin{align*}
	\overline{W_\textbf{c}(A)}=[\alpha,\beta].
	\end{align*}
\end{lemma}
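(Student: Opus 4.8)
The plan is to prove the identity by squeezing $\overline{W_\mathbf{c}(A)}$ between the two inclusions $W_\mathbf{c}(A)\subseteq[\alpha,\beta]$ and $[\alpha,\beta]\subseteq\overline{W_\mathbf{c}(A)}$, reducing each direction to the finite-dimensional Lemma \ref{theorem for C numerical range} together with the convergence statement in Lemma \ref{1}. First I would record the structural facts that make the reduction work: since $\mathbf{c}$ has finite rank we may write $\mathbf{c}=\mathrm{diag}(c_1,\dots,c_n,0,0,\dots)$, and the ordering $c_1\geq c_2\geq\cdots$ together with the eventual vanishing forces $c_1\geq\cdots\geq c_n\geq 0$. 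Hence both series defining $\alpha$ and $\beta$ are genuinely finite sums of $n$ terms, and because $A$ is self-adjoint one has $W_\mathbf{c}(A)=\{\sum_{j=1}^n c_j\langle Ae_j,e_j\rangle:\{e_j\}_{j=1}^n\text{ orthonormal}\}\subseteq\mathbb{R}$, with $\lambda_j(A),\nu_j(A)\in[-\Vert A\Vert,\Vert A\Vert]$ all finite.

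For the inclusion $\overline{W_\mathbf{c}(A)}\subseteq[\alpha,\beta]$ I would take an arbitrary orthonormal set $\{e_j\}_{j=1}^n$, put $\mathcal{N}=\mathrm{span}\{e_1,\dots,e_n\}$ and let $A_\mathcal{N}=P_\mathcal{N}A|_\mathcal{N}\in M_n$ be its self-adjoint compression. Since each $e_j\in\mathcal{N}$ we have $\langle Ae_j,e_j\rangle=\langle A_\mathcal{N}e_j,e_j\rangle$, so the corresponding value lies in $W_\mathbf{c}(A_\mathcal{N})$, which by Lemma \ref{theorem for C numerical range} equals $[\sum_j c_j\nu_j(A_\mathcal{N}),\sum_j c_j\lambda_j(A_\mathcal{N})]$. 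I would then invoke the monotonicity of these quantities under compression: directly from the definition of $\lambda_j(A)$ as a supremum over $j$-dimensional compressions one gets $\lambda_j(A_\mathcal{N})\leq\lambda_j(A)$, and applying the same bound to $-A$ gives $\nu_j(A_\mathcal{N})=-\lambda_j(-A_\mathcal{N})\geq-\lambda_j(-A)=\nu_j(A)$. Because every $c_j\geq 0$, these term-by-term inequalities yield $[\sum_j c_j\nu_j(A_\mathcal{N}),\sum_j c_j\lambda_j(A_\mathcal{N})]\subseteq[\alpha,\beta]$, so $W_\mathbf{c}(A)\subseteq[\alpha,\beta]$ and, taking closures, $\overline{W_\mathbf{c}(A)}\subseteq[\alpha,\beta]$.

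For the reverse inclusion I would fix an increasing sequence of finite-dimensional subspaces $\mathcal{H}_m$ with $\dim\mathcal{H}_m\geq n$ and $\overline{\bigcup_m\mathcal{H}_m}=\mathcal{H}$, and set $A_m=P_{\mathcal{H}_m}A|_{\mathcal{H}_m}$. Orthonormal sets lying inside $\mathcal{H}_m$ show $W_\mathbf{c}(A_m)\subseteq W_\mathbf{c}(A)$, hence $[\alpha_m,\beta_m]:=\overline{W_\mathbf{c}(A_m)}\subseteq\overline{W_\mathbf{c}(A)}$, where by Lemma \ref{theorem for C numerical range} we have $\beta_m=\sum_j c_j\lambda_j(A_m)$ and $\alpha_m=\sum_j c_j\nu_j(A_m)$. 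Now Lemma \ref{1} gives $\lambda_j(A_m)\to\lambda_j(A)$ for each $j$, and the same lemma applied to $-A$ gives $\nu_j(A_m)=-\lambda_j(-A_m)\to-\lambda_j(-A)=\nu_j(A)$; since each sum has only $n$ terms this forces $\beta_m\to\beta$ and $\alpha_m\to\alpha$. Consequently $\alpha,\beta\in\overline{W_\mathbf{c}(A)}$, and every $t\in(\alpha,\beta)$ lies in $[\alpha_m,\beta_m]\subseteq\overline{W_\mathbf{c}(A)}$ for all large $m$, so $[\alpha,\beta]\subseteq\overline{W_\mathbf{c}(A)}$. Combining the two inclusions completes the proof.

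The reduction to $M_n$ and the two compression inequalities are routine. The step that needs genuine care is the limiting argument for the \emph{lower} endpoint $\alpha$: one must recognize $\nu_j=-\lambda_j(-\,\cdot\,)$ and feed $-A$ into Lemma \ref{1}, and the passage $\alpha_m\to\alpha$ relies essentially on the finiteness of the rank of $\mathbf{c}$ (so that finitely many term-wise limits may be added) and on the sign condition $c_j\geq 0$ that the ordering together with finite rank automatically provides.
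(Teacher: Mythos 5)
Your proof is correct, and it rests on the same two pillars as the paper's: Lemma \ref{theorem for C numerical range} for the finite-dimensional sections and Lemma \ref{1} (applied to both $A$ and $-A$) for the limiting behaviour of the endpoints. The difference is in how the two inclusions are organized. The paper compresses everything into a single chain of set identities, beginning with the assertion $\overline{W_\textbf{c}(A)}=\bigcap_{j\geq 1}\overline{\bigcup_{n\geq j}W_{\textbf{c}_n}(A_n)}$ for an exhausting sequence $\mathcal{H}_n$; the inclusion ``$\subseteq$'' of that assertion is precisely the nontrivial half and is left unjustified there --- making it rigorous would require approximating an arbitrary orthonormal $n$-set in $\mathcal{H}$ by orthonormal sets inside the $\mathcal{H}_m$ and invoking continuity of $x\mapsto\langle Ax,x\rangle$. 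You replace that approximation step by a direct compression argument: every value of $W_\textbf{c}(A)$ lies in $W_{\textbf{c}_n}(A_\mathcal{N})$ for the compression $A_\mathcal{N}$ to the span of the chosen orthonormal set, and the interlacing-type inequalities $\lambda_j(A_\mathcal{N})\leq\lambda_j(A)$ and $\nu_j(A_\mathcal{N})\geq\nu_j(A)$, combined with $c_j\geq 0$, push $W_{\textbf{c}_n}(A_\mathcal{N})$ inside $[\alpha,\beta]$. Your reverse inclusion (exhaustion, $W_\textbf{c}(A_m)\subseteq W_\textbf{c}(A)$, then Lemma \ref{1}) coincides with the ``$\supseteq$'' content of the paper's chain. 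The net effect is that your argument is slightly longer but more self-contained, and it makes explicit two facts the paper uses silently: that finite rank together with the ordering forces $c_j\geq 0$ (without which both the term-by-term bounds and the monotone convergence of $\alpha_m,\beta_m$ would fail), and that the lower endpoint must be handled by recognizing $\nu_j=-\lambda_j(-\,\cdot\,)$ and feeding $-A$ into Lemma \ref{1}.
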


\begin{proof}
	Let $\{e_1,e_2,\cdots\}$ be an orthonormal basis of $\mathcal{H}$. Suppose $\mathcal{H}_n=\text{span}\{e_1,\cdots,e_n\}$ and $A_n=P_{\mathcal{H}_n}A|_{\mathcal{H}_n}$. Then
	\begin{align*}
	\overline{W_\textbf{c}(A)}
	&=\bigcap\limits_{j=1}^\infty\overline{\bigcup\limits_{n\geq j}W_{\textbf{c}_n}(A_n)}, \text{ where }\textbf{c}_n=\text{diag}(c_1,\cdots,c_n)\\
	&=\bigcap\limits_{j=1}^\infty\overline{\bigcup\limits_{n\geq j}[\alpha_n,\beta_n]},\text{ by Lemma }\ref{theorem for C numerical range},\; \alpha_n=\sum_{j=1}^nc_j\nu_j(A_n),\; \beta_n=\sum_{j=1}^nc_j\lambda_j(A_n)\\
	&=\left[\lim\limits_{n\to\infty}\inf\alpha_n,\lim\limits_{n\to\infty}\sup\beta_n\right]\\
	&=[\alpha,\beta],\text{ by Lemma }\ref{1}.
	\end{align*}
\end{proof}

We are now ready to prove Theorem \ref{main theorem 3}.

\begin{proof}[\textbf{Proof of Theorem \ref{main theorem 3}}]
	Let $A=
	\begin{pmatrix}
	0 & 1 \\
	0 & 0 \\
	\end{pmatrix}$ and $\textbf{c}=I_2$. Then $W_\textbf{c}(A)=\{0\}$. If possible, let
	\begin{align*}
	W_\textbf{c}(A)=\bigcap\left\{\overline{W_{\textbf{c}\oplus \textbf{0}}(U)}: U \text{ is a unitary dilation of A}\right\}. 
	\end{align*}
	Then 
	\begin{align*}
	W_\textbf{c}\left(\Re (A)\right)=\bigcap\left\{\overline{W_{\textbf{c}\oplus \textbf{0}}\left(\Re (U)\right)}: U \text{ is a unitary dilation of A}\right\}.
	\end{align*}
	Now, by Lemma \ref{theorem for C numerical range-2}, we obtain $\overline{W_{\textbf{c}\oplus \textbf{0}}\left(\Re(U)\right)}=\left[\alpha_U,\beta_U\right]$ with $\alpha_U\leqslant 0\leqslant\beta_U$, where $\alpha_U=\nu_1(\Re(U))+\nu_2(\Re(U)) \text{ and } \beta_U=\lambda_1(\Re(U))+\lambda_2(\Re(U))$. So,
	\begin{align*}
	\{0\}=W_\textbf{c}(\Re (A))=\bigcap\limits_U\left[\alpha_U,\beta_U\right].
	\end{align*}
Then there exists a sequence of unitary operators $\{U_n\}_{n=1}^{\infty}$ such that $ \beta_{U_n}\rightarrow 0$ whenever $n\rightarrow \infty$. Given $\epsilon>0,$ there exists $n_{\circ}\in\mathbb{N}$ such that $\beta_{U_{n_{\circ}}}<\epsilon$, that is, $\lambda_2(\Re(U_{n_{\circ}})+\lambda_1(\Re(U_{n_{\circ}}))<\epsilon,$ which implies that $\overline{W(U_{n_{\circ}})}=\overline{\text{conv}}\sigma(U_{n_0})$ does not contain $W(A)=\{z\in\mathbb{C}: \vert z\vert\leqslant\frac{1}{2}\}$. It is a contradiction as  $U_{n_{\circ}}$ is a dilation of $A$. Hence the intersection of the closure of the $\textbf{c}\oplus \textbf{0}$-numerical ranges of all unitary dilations of $A$ contains $W_\textbf{c}(A)$ as a proper subset.

\end{proof}

\bibliographystyle{elsarticle-num}

\end{document}